\theoremstyle{plain}
\newtheorem{theorem}{Theorem}[section]
\newtheorem{lemma}[theorem]{Lemma}
\theoremstyle{definition}
\newtheorem{definition}[theorem]{Definition}
\newtheorem{example}[theorem]{Example}
\newtheorem{question}[theorem]{Question}
\theoremstyle{remark}
\newtheorem{remark}[theorem]{Remark}
\newcommand{\Cc}{\mathbb{C}}
\newcommand{\Ee}{\mathbb{E}}
\newcommand{\Ff}{\mathbb{F}}
\newcommand{\Nn}{\mathbb{N}}
\newcommand{\Pp}{\mathbb{P}}
\newcommand{\Rr}{\mathbb{R}}
\newcommand{\Un}{\mathds{1}}
\newcommand{\Le}{\mathcal{L}}
\newcommand{\Pe}{\mathcal{P}}
\newcommand{\Qe}{\mathcal{Q}}
\newcommand{\Ns}{\mathscr{N}}
\newcommand{\Ps}{\mathscr{P}}
\newcommand{\Qs}{\mathscr{Q}}
\newcommand{\Us}{\mathscr{U}}
\newcommand{\Rg}{\mathfrak{R}}
\newcommand{\Sg}{\mathfrak{S}}
\newcommand{\eg}{\mathfrak{e}}
\newcommand{\ensemble}[1]{ \left\lbrace #1 \right\rbrace } 
\newcommand{\prth}[1]{\!\left( #1 \right) }
\newcommand{\crochet}[1]{\left[ #1 \right] }  
\newcommand{\intcrochet}[1]{\llbracket #1 \rrbracket} 
\newcommand{\abs}[1]{\left| #1 \right|}  
\newcommand{\norm}[1]{\left| \! \left| #1 \right| \! \right|}
\newcommand{\Esp}[1]{ \Ee \prth{ #1 } }  
\newcommand{\Prob}[1]{ \Pp \prth{ #1 } }
\def\longlongrightarrow{\hspace{+0.1ex} - \hspace{-1.1ex} - \hspace{-1.1ex} - \hspace{-1.1ex}\longrightarrow  } 
\newcommand{\tendvers}[2]{ \underset{#1 \rightarrow #2}{\longlongrightarrow} }  
\newcommand{\cvlaw}[2]{\stackrel{\Le}{\underset{#1 \, \rightarrow \, #2}{\longlongrightarrow}}}  
\newcommand{\cvmodp}[2]{\stackrel{\operatorname{mod-P}}{\underset{#1 \, \rightarrow \, #2}{\longlongrightarrow}}}
\newcommand{\bracket}[1]{\langle #1 \rangle}
\newcommand{\Unens}[1]{ \Un_{ \ensemble{#1} } }
\def\eqlaw{\stackrel{\Le}{=}}
\newcommand{\pe}[1]{\left[ #1 \right] }
\def\ddivise{ {|} }%{ \circ\hspace{-0.4ex}|  }
\newcommand{ \ppee }{\operatorname{p}}
\def\ee{\textbf{e}}
\def\geq{\geqslant}
\def\leq{\leqslant}
\def\Re{\Rg \eg}
\let\oldforall\forall
\def\forall{\oldforall\,} %\addto{\forall}{\,}
\let\oldexists\exists
\def\exists{\oldexists\,}
\newcommand{\emailhref}[1]{ \email{\href{mailto:#1}{#1}} }
\begin{document}

% == Title 

\title[A model reproducing the mod-Poisson fluctuations in the Selberg-Sath\'e theorem \ \ \ ]{ A penalised model reproducing the mod-Poisson fluctuations in the Sath\'e-Selberg theorem}

% == Author 

\author[Y. Barhoumi-Andr\'eani]{Yacine Barhoumi-Andr\'eani}
\address{Department of Statistics, University of Warwick, Coventry CV4 7AL, U.K.}
\emailhref{y.barhoumi-andreani@warwick.ac.uk}

% == Subjclass 2010

\subjclass[2010]{60E10, 60E05, 60F05, 60G50, 60F99, 11K99, 11K65}

\date{\today}

% == Abstract

\begin{abstract}
We construct a probabilistic model for the number of divisors of a random uniform integer that converges in the mod-Poisson sense to the same limiting function as its original counterpart, the one arising in the Sath\'e-Selberg theorem. This construction involves a conditioning and gives an alternative perspective to the usual paradigm of ``hybrid product'' models developed by Gonek, Hughes and Keating in the case of the Riemann Zeta function.
\end{abstract}
\maketitle

% ==
\section{Introduction}

The Erd\"os-Kac theorem in probabilistic number theory concerns the Gaussian fluctuations of the number of prime divisors of a random integer : if $ \Pe $ denotes the set of prime numbers, let $ \omega(N) $ be the number of prime divisors of $ N \in \Nn $ defined by
\begin{align*}%$
\omega(N) :=  \sum_{ p \in \Pe } \Unens{ p \ddivise N }
\end{align*}
and let $ U_n $ be a random variable uniformly distributed in $ \ensemble{1, \dots, n} $. The Erd\"os-Kac theorem writes (see \cite{ErdosKac, ErdosKac2})
\begin{align}\label{Thm:ErdosKac}%$
\sup_{x \in \Rr} \abs{ \Prob{ \frac{ \omega(U_n) - \log\log n }{ \sqrt{ \log\log n }  }  \leq x } - \int_{-\infty }^x e^{- u^2 / 2 } \frac{ du }{ \sqrt{2 \pi } } }    \tendvers{n}{ + \infty } 0
\end{align}

The key understanding of this theorem is the following : the random variables 
\begin{align*}%$
B_p^{(n)} := \Unens{ p \ddivise U_n }
\end{align*}
are $ \ensemble{0, 1} $-Bernoulli random variables that are weakly correlated, and their approximation by a sequence of independent random variables is accurate at the the level of this Central Limit Theorem (CLT). Concretely, one can perform the approximation
\begin{align*}%$
\omega(U_n) = \sum_{ p \in \Pe } \Unens{ p \ddivise U_n }  = \sum_{ p \in \Pe } B_p^{(n)} \underset{n \to + \infty}{\approx} \sum_{ p \in \Pe, p \leq n } B_p^{(\infty)}
\end{align*}
the $ B_p^{(\infty)} $'s being independent Bernoulli random variables such that
\begin{align*}%$
\Prob{ \! B_p^{(\infty)} = 1 } = \frac{1}{p} = 1 - \Prob{ \! B_p^{(\infty)} = 0 }
\end{align*}

To measure the accuracy of this last approximation, we introduce the \textit{independent model}
\begin{align*}%$
\Omega_n := \sum_{ p \in \Pe, p \leq n } B_p^{(\infty)}
\end{align*}

A \textit{model} of a random or a deterministic sequence is a random sequence that can be substituted to its orginal in a prescribed framework while still capturing its main properties, for instance a particular type of convergence.

At the order of renormalisation of the CLT given by \eqref{Thm:ErdosKac}, the independent model is accurate since one can write
\begin{align*}%$
\frac{ \omega(U_n) - \log\log n }{ \sqrt{ \log\log n }  } \approx \frac{ \Omega_n - \sum_{ p \in \Pe, p \leq n } \frac{1}{p} }{ \sqrt{ \sum_{ p \in \Pe, p \leq n } \frac{1}{p}}  } \cvlaw{n}{+ \infty}   \Ns(0, 1)
\end{align*}

Here, we have used the well-known estimate for the \textit{prime harmonic sum}
\begin{align}\label{Def:PrimeHarmonicSum}%$
H^{(\Pe)}_n :=  \sum_{ p \in \Pe, p \leq n } \frac{1}{p} = \log\log n + O(1)
\end{align}

This model is interesting to understand the Erd\"os-Kac CLT, but it hides a certain amount of information since at the second order of renormalisation the dependency of the $ B_p^{(n)} $'s re-appears : one has the following result due to Selberg \cite{SelbergSathe} improving a result of Sath\'e \cite{Sathe}
\begin{align}\label{Thm:SelbergSatheEstimate}%$
\Esp{ z^{  \omega(U_n) } } = e^{ (z-1) (\log\log n + \kappa)} \Phi_\omega\prth{ z } \prth{ 1 + O\prth{ (\log n)^{\Re(z - 2)} } }
\end{align}
where, for $ R > 0 $, the $ O $ is uniform for $ \abs{z} \leq R $, where $ \kappa $ is an absolute positive constant and 
\begin{align}\label{Thm:SplittingModPoisson}%$
\Phi_\omega(z) := \prod_{k \in \Nn^* } \prth{ 1 + \frac{z - 1}{k} } e^{- \frac{z-1}{k} } \, \prod_{p \in \Pe } \prth{ 1 + \frac{z - 1}{p} } e^{- \frac{z-1}{p} }
\end{align}

But one has
\begin{align}\label{Thm:SelbergSatheIndependentEstimate}%$
\Esp{ z^{  \Omega_n } } = e^{ (z-1) (\log\log n + \kappa')} \Phi_\Omega\prth{ z } \prth{ 1 + O\prth{ (\log n)^{\Re(z - 2)} } }
\end{align}
with $ \kappa' $ another absolute constant and 
\begin{align*}%$
\Phi_\Omega(z) :=  \prod_{p \in \Pe } \prth{ 1 + \frac{z - 1}{p} } e^{- \frac{z-1}{p} }
\end{align*}
leading to a corrective factor $ \Phi_C(z) :=  \prod_{k \in \Nn^* } \! \prth{ 1 + \frac{z - 1}{k} } e^{- \frac{z-1}{k} } $ such that $ \Phi_\omega(z) \!= \Phi_\Omega(z) \Phi_C(z) $. This factor is easily seen to be the limiting function of the random variable
\begin{align*}%$
C_n := \sum_{k = 1}^n B_k^{(\infty)}
\end{align*}
which is equal in distribution to the number of cycles $ C(\sigma_n) $ of a random uniform permutation $ \sigma_n \in \Sg_n $ (see \cite{ArratiaBarbourTavare, KowalskiNikeghbali2}), hence the name.

$ $

Let $ (X_n) $ be a sequence of random variables and let $ P_{\gamma_n } $ be a Poisson-distributed random variable of parameter $ \gamma_n \in \Rr_+ $. When there exists a continuous function $ \Phi : A \subseteq \Cc \to \Cc $ satisfying some technical conditions (see definition \ref{Def:ModPoissonCv}) such that the following convergence holds locally uniformly in $ z \in A $
\begin{align*}%$
\frac{ \Esp{ z^{X_n} } }{ \Esp{ z^{ P_{\gamma_n } } } } \tendvers{n}{ + \infty } \Phi(z)
\end{align*}
one says that $ (X_n, \gamma_n)_n $ converges \textit{in the mod-Poisson sense} to $ \Phi $. 

This particular type of convergence was introduced in \cite{KowalskiNikeghbali2} following a similar development in the Gaussian setting in \cite{JacodAl}. 
It is unusual in Probability theory where the Fourier-Laplace transform is not often renormalised ; it implies the usual CLT by a change of renormalisation. This is the mode of convergence underlying equations \eqref{Thm:SelbergSatheEstimate} and \eqref{Thm:SelbergSatheIndependentEstimate} since 
\begin{align*}%$
\Esp{ z^{ P_{\gamma_n } } } = e^{\gamma_n (z-1) }
\end{align*}

A natural question arises from the last computations : 

\begin{question}\label{Q:QuestionArticle}

How to refine the independent model $ \Omega_n $ to get a model that would reproduce the mod-Poisson fluctuations, i.e. a model that would converge in the mod-Poisson sense to the function $ \Phi_\omega $ ?

\end{question}

The creation of heuristic probabilistic models aiming at understanding the sequence of prime numbers originates in the work of Cramer \cite{CramerRandom} and has recently seen some spectacular developments with the work \cite{LemkeSound} that precises Cramer's original idea that prime numbers behave ``at random''. The approach in this article is conceptually identical to the work of Hardy and Littlewood (see \cite{HardyLittlewoodOnCramer} or \cite{SoundSmallGaps}) that refines the coarse Cramer's model to incorporate effects likely to explain the distribution of twin primes. This refinement is done by biasing the probabilities of the Cramer model ; an enlightening description of the biasing procedure is given in \cite{SoundSmallGaps}. The analogy stops nevertheless here : the Cramer model consists in heuristically replacing the sequence of primes by a random sequence and argue that they behave in a ``similar way''~; as \cite{SoundSmallGaps} remarks, such an approach ``must be taken with a liberal dose of salt''. Here, no such heuristic replacement is carried out (for a comparison of unrelated probabilistic model in number theory, see \cite{TaoBlogCramer}). The problem here tackled is to construct a probabilistic approximation of a ``true'' random variable, $ \omega(U_N) $, but the approximation has to be understood in a peculiar probabilistic sense (the mod-Poisson one in place of, say, a total variation approximation), which motivates the terminology of \textit{model} such as defined previously.

The question \ref{Q:QuestionArticle} is motivated by a similar development for the random variable $ \zeta(1/2 + i t U) $ where $ U $ is a random variable uniformly distributed on the interval $ \crochet{0, 1} $, $ t > 0 $ and $ \zeta $ is the Riemann Zeta function. This random variable satisfies a CLT due to Selberg (see e.g. \cite{Joyner}) in the same vein as the Erd\"os-Kac one for $ \omega(U_n) $, namely
\begin{align*}%$
\sup_{x \in \Rr} \abs{ \Prob{ \frac{\log\abs{ \zeta(1/2 + i t U) } }{ \sqrt{ \frac{1}{2} \log\log t } } \leq x } - \int_{-\infty }^x e^{- u^2 / 2 } \frac{ du }{ \sqrt{2 \pi } } }    \tendvers{t}{ + \infty } 0
\end{align*}

The question of computing the limiting function for the ``mod-Gaussian renormalisation'' given for all $ \lambda \in i \Rr $ by
\begin{align*}%$
\frac{ \Esp{ e^{ \lambda \log\abs{\zeta(1/2 + i t U)} } }  }{ e^{ \frac{\lambda^2}{2} \times \frac{1}{2} \log\log t } } \tendvers{t}{ + \infty }  \Phi_\zeta(\lambda) = \Phi_{\operatorname{Matrix}}(\lambda) \Phi_{\operatorname{Arithmetic}}(\lambda)
\end{align*}
is the celebrated Keating-Snaith's moments conjecture (see \cite{KeatingSnaith} for the definitions).

In order to understand this last convergence, the authors of \cite{GonekHughesKeating} construct a ``hybrid product'' model for $ \log\abs{\zeta(1/2 + i t U)} $ that converges in the mod-* sense to the limiting function $ \Phi_\zeta $. Answering question \ref{Q:QuestionArticle} for the ``toy model'' given by $ \omega(U_n) $ is thus of importance for it may give a hint to understand $ \log\abs{\zeta(1/2 + i t U)} $.

We refer to \cite{GonekHughesKeating} for the exact details of the $ \zeta $ model ; instead of describing it, let us find its equivalent for $ \omega(U_n) $. Since the limiting function occuring in \eqref{Thm:SelbergSatheIndependentEstimate} is a product $ \Phi_\omega = \Phi_\Omega \Phi_C $, the idea is to think of $ \omega(U_n) $ as being approximately an independent sum of random variables created by means of Bernoulli random variables, i.e.
\begin{align}\label{Eq:HybridSum}%$
\omega(U_n) \approx \omega_n := \sum_{k = 1}^A B_k + \sum_{k = 1}^{A'} B'_k
\end{align}
with $ \Prob{B_k = 1} = \frac{1}{\ppee_k} $, $ \Prob{B'_k = 1} = \frac{1}{ k} $ and where $ A, A' $ are choosen so that the mod-Poisson speed of convergence $ \gamma_n = \log \log n + \kappa $ of $ \omega(U_n) $ matches the speed of convergence of $ \omega_n $. We have set $ \Pe := \ensemble{ \ppee_k, k \geq 1 } $. As $ \gamma_n $ is asymptotically the mean of $ \omega(U_n) $, using the classical relation $ \sum_{1 \leq k \leq n} 1/k = \log n + O(1) $ one finds
\begin{align*}%$
\Esp{ \omega_n } = \sum_{k = 1}^A \frac{1}{\ppee_k} + \sum_{k = 1}^{A'} \frac{1}{k} = \log \log A + \log (A') + O(1)
\end{align*}
which amounts to
\begin{align*}%$
A' \log A = O(\log n) 
\end{align*}
the constant in the $ O $ being explicitely known. 

This intuitive model, despite its degree of freedom and its artificial character, has the advantage of being an acceptable mod-Poisson model for $ \omega(U_n) $ since it converges in the mod-Poisson sense to $ \Phi_\omega = \Phi_\Omega \Phi_C $. Nevertheless, one could ask for another reason why the limiting correction to the independence takes the form of an additive independent term, and why this correction is again constructed by means of independent random variables. One can argue that a natural modification of the initial model is more likely to be understood by a biasing \`a la Hardy-Littlewood \cite{HardyLittlewoodOnCramer, SoundSmallGaps} instead of a summation paradigm. This is the goal of this article. 

More precisely, we will answer question \ref{Q:QuestionArticle} by conditioning a random proportion of primes to be divisors with probability one in a slight modification of $ \Omega_n $, i.e. by modifying the probabilities of the Bernoulli sum in the same way Hardy and Littlewood modify the probabilities of the Cramer Bernoulli variables, namely

\vspace{+0.25cm}
\noindent \textbf{Theorem.} Set $ \gamma_n := \log\log n $. For $ \theta > 0 $, let $ B_k(\theta) $ be the Bernoulli random variable given by
\begin{align*}%$
\Prob{ B_k(\theta) = 1} =  \frac{ \theta }{\theta + k - 1 } = 1 - \Prob{ B_k(\theta) = 0}
\end{align*}

There exist a real sequence $ (v_n)_n $, a random integer $ C'_n $ and a sequence $ (I_\ell)_\ell $ of i.i.d. random integers independent of $ (B_k(v_n))_k $ and $ C'_n $ (all quantities explicitely defined in theorem \ref{Theorem:ErdosKacImprovement}) such that
\begin{align*}%$
\Omega''_n   :\eqlaw \bigg( \sum_{ k   } B_{\ppee_k }(v_n) \, \bigg\vert B_{\ppee_{I_1} }(v_n) = \cdots = B_{\ppee_{I_{C'_n} }}(v_n)  = 1 \bigg)
\end{align*}
satisfies
\begin{align*}%$
\frac{  \Esp{ x^{ \Omega''_n  } } }{ \Esp{ x^{ P_{\gamma_n}  } } } \tendvers{n}{ + \infty } \Phi_\omega(x)  
\end{align*}

\vspace{+0.25cm}

The explicit description of all involved parameters is the content of theorem \ref{Theorem:ErdosKacImprovement}. It will be proven using a probabilistic interpretation of mod-Poisson convergence developed in \cite{BarhoumiModStein}, which interprets it as a change of probability. In the context of discrete random variables, conditioning and biasing can be understood in the same framework, which deepends the analogy with the Hardy-Littlewood approach.

% ==
\section*{Notations}

We gather here some notations used throughout the paper. 

The set $ \ensemble{1, 2, \dots, n } $ will be denoted by $ \intcrochet{1, n} $. The set of prime numbers will be denoted by $ \Pe := \ensemble{ \ppee_k, k \geq 1 } $.

The distribution of a real random variable $ X $ in the probability space endowed with the measure $ \Pp $ will be denoted by $ \Pp_X $ : if $ A $ is a measurable set, $ \Pp_X(A) := \Prob{X \in A } $. 

If $X$ and $Y$ are two random variables having the same distribution, that is $ \Pp_X = \Pp_Y $, we will note $ X \eqlaw Y $ or equivalently $ X \sim \Pp_Y $. We will denote by $ \Ps(\gamma) $ the Poisson distribution of parameter $ \gamma > 0 $, by $ \Ns(0, 1) $ the standard Gaussian distribution and by $ \Us(A) $ the uniform distribution on the set $ A $.

For $ f \in L^1(\Pp_X) $, $ f \geq 0 $, the penalisation or bias of $ \Pp_X $ by $f$ is the probability measure $ \Pp_Y $ denoted by
\begin{align*}%$ 
\Pp_{ Y } := \frac{ f(X) }{ \Esp{ f(X) } } \bullet \Pp_{ X }
\end{align*}

This definition is equivalent to the following : for all $ g \in L^\infty(\Pp_X) $, 
\begin{align*}%$ 
\Esp{ g(Y) } = \frac{ \Esp{ f(X) g(X) }  }{ \Esp{ f(X) } } 
\end{align*}

Note that in a discrete setting, conditioning amounts to take $ f = \Un_A $ for a suitable set $A$.

A partition of an integer $ N $ is a sequence of integers $ \lambda = (\lambda_1, \dots, \lambda_k) $ where $ \lambda_1 \geq \lambda_2 \geq \dots $ and $ \sum_{i = 1}^k \lambda_i = N $. We define the length of such a partition by $ \ell(\lambda) := k $. The paintbox process (see \cite{PitmanStFlour}) is a random partition $ \lambda = (\lambda_1, \dots, \lambda_k) $ constructed in the following way : let $ (I_i)_i $ be i.i.d. integer valued random variables and define the (random) equivalence relation by 
\begin{align}\label{Def:Paintbox}%$
k \sim r \ \ \ \Longleftrightarrow \ \ \ I_k = I_r
\end{align}

The equivalence classes of this relation define a random partition $ \lambda $. In the case where $ I \sim \Us\prth{ \intcrochet{1, N } } $, this random partition is equal in law to the cycle structure of a random uniform permutation $ \sigma \in  \Sg_N $ and in particular, the total number of cycles of $ \sigma $ satisfies $ C(\sigma) = \ell(\lambda) $.

% ==
\section{Reminder on mod-Poisson convergence}

% ==
\subsection{Definition and examples}

Let $ P_\gamma \sim \Ps(\gamma) $ with $ \gamma > 0 $. Recall that
\begin{align*}%$
\Pp(P_\gamma = k) = e^{- \gamma } \frac{\gamma^k}{k!} 
\end{align*}
which is a statement equivalent to $ \Esp{e^{i u P_\gamma} } = \exp\prth{ \gamma ( e^{iu } - 1 ) } $ for all $ u \in \Rr $. We define the mod-Poisson convergence in the Laplace-Fourier setting by the following 

\begin{definition}\label{Def:ModPoissonCv} Let $ (Z_n)_n $ be a sequence of positive random variables and $ (\gamma_n)_n $ be a sequence of strictly positive real numbers. $ (Z_n)_n $ is said to converge in the mod-Poisson sense at speed $ (\gamma_n)_n $ if for all $ z \in \Cc $, the following convergence holds locally uniformly in $ z \in \Cc $
\begin{align*}%$
\frac{ \Esp{ z^{  Z_n} } }{ \Esp{ z^{   P_{\gamma_n} } } } \tendvers{n }{ + \infty } \Phi\prth{ z }
\end{align*}
where $ \Phi : \Cc \to \Cc $ is a continuous function satisfying $ \Phi(1) = 1 $, $ \Phi(\overline{z}) = \overline{\Phi(z) } $ and with $ P_{\gamma_n} \sim  \Ps(\gamma_n) $.

When such a convergence holds, we write it as
\begin{align*}%$
(Z_n, \gamma_n ) \cvmodp{n }{ + \infty } \Phi
\end{align*}
\end{definition}

\begin{remark} The limiting function $ \Phi $ is not unique, since it is defined up to multiplication by an exponential (see \cite{JacodAl}).
\end{remark}

\begin{remark} The original definition used in \cite{KowalskiNikeghbali2} is in the Fourier setting, i.e. for $ \abs{z} = 1 $. The advantage of this definition is that the Fourier transform of a random variable always exists. %The Laplace setting, i.e. for $ z \in \Rr_+ $ is used for instance in \cite{BarhoumiModStein}. 
Other restrictions of the domain of convergence are possible. For instance, \cite{FerayMeliotNikeghbali} uses $ \ensemble{ -c < \Re < c } $ for a certain $ c > 0 $. In the Laplace case, one has, locally uniformly in $ x \in \Rr_+ $
\begin{align*}%$
\frac{ \Esp{ x^{  Z_n} } }{ \Esp{ x^{   P_{\gamma_n} } } } \tendvers{n }{ + \infty } \Phi\prth{ x }
\end{align*}
and in particular, $ \Phi(x) \geq 0 $ for all $ x \in \Rr_+ $.
\end{remark}

From now on, we restrict ourselves to the Laplace setting. Mod-Poisson convergence will always mean ``in the Laplace setting'' unless specified. In particular, the limiting mod-Poisson function $ \Phi $ will be defined on $ \Rr_+ $ and a quantity such as $ \norm{ \Phi }_\infty $ will be understood as $ \underset{x \in \Rr_+ }{\sup} \abs{ \Phi(x) } $.

$ $

The following example is fundamental to understand mod-Poisson convergence :
\begin{example}\label{Ex:LimitePoissonGenerale} Let $ (B_k)_k $ be a sequence of Bernoulli random variables such that 
\begin{align*}%$
p_k := \Prob{B_k = 1} = 1 - \Prob{B_k = 0}
\end{align*}
where $ (p_k)_k $ is a sequence of real numbers satisfying the conditions
\begin{align*}%$
& (i) \,   \sum_{k \geq 1} p_k = + \infty \\
& (ii)     \sum_{k \geq 1} p_k^2 < + \infty
\end{align*}

Let $ Z_n := \sum_{k = 1}^n B_k $ and $ \gamma_n := \sum_{k = 1}^n p_k $. Then,
\begin{align*}%$
(Z_n, \gamma_n ) \cvmodp{n }{ + \infty } \Phi
\end{align*}
where
\begin{align}\label{LimitePoissonGenerale}%$
\Phi(x) = \prod_{k \geq 1 }  (1 + p_k (x-1) ) e^{ - p_k (x - 1) }
\end{align}

Indeed, setting $ P_{\gamma_n } \sim \Ps(\gamma_n) $ one has, locally uniformly in $ x $
\begin{align*}%$
\frac{ \Esp{ x^{Z_n} } }{ \Esp{ x^{ P_{\gamma_n } } } } & =  \frac{\prod_{k = 1}^n \Esp{ x^{B_k} }}{e^{ \gamma_n (x-1) }} = \prod_{k = 1}^n \prth{ 1 + p_k(x - 1) } e^{- p_k (x-1) }  \\
              & \tendvers{n }{ + \infty } \prod_{k \geq 1 }  (1 + p_k (x-1) ) e^{ - p_k (x - 1) }
\end{align*}
since $ \sum_k p_k^2 < \infty $ and $ (1 + p_k (x-1) ) e^{ - p_k (x - 1) } = \exp\prth{ - p_k^2 (x-1)^2 / 2 + o(p_k^2)  } $.
\end{example}

One can see that equation \eqref{Thm:SelbergSatheIndependentEstimate} is a particular case of this last theorem with $ p_k = 1/ \ppee_k $ where $ \Pe := \ensemble{ \ppee_k, k \geq 1 } $. As pointed out in the introduction, this is also the case of equation \eqref{Thm:SelbergSatheEstimate} using the ``hybrid sum'' $ \omega_n $ of \eqref{Eq:HybridSum} that incorporates the corrective term
\begin{align*}%$
\Phi_C(x) := \prod_{k \geq 1}  \prth{ 1 + \frac{x-1}{k} } e^{- \frac{x-1}{k} }  
\end{align*}

This term is the limiting mod-Poisson function of the random variable $ C_n := \sum_{k = 1}^n B_k $ where $ (B_k)_k $ is the last sequence of Bernoulli random variables with $ p_k = 1/k $, and with speed 
\begin{align*}%$
H_n := \sum_{k = 1}^n \frac{1}{k} = \log n + \gamma + o(1)
\end{align*}
where $ \gamma $ is the Euler-Mascheroni constant.

\begin{remark}\label{RemarqueCyclesPermutation}
Such a random variable has also the distribution of the total number of cycles of a random permutation selected according to the uniform distribution $ \Pp_n $ defined by $ \Pp_n(\sigma) = 1/ n! $ for all $ \sigma \in \Sg_n $ (see e.g. \cite{ArratiaBarbourTavare}). Using the formula (see e.g. \cite{WhittakerWatson} 12.11)
\begin{align*}%$
\frac{1}{ \Gamma(z ) } =  e^{ (z-1) \gamma } \prod_{k \geq 1} \prth{ 1 + \frac{z - 1}{k} } e^{ - \frac{z - 1}{k} }
\end{align*}
$ \Phi_C(z) $ can be replaced by $ 1/\Gamma(z) $ when the speed $ H_n $ is replaced by $ H_n - \gamma $.
\end{remark}

% ==

\subsection{A probabilistic interpretation of mod-Poisson convergence}

We recall the following theorem from \cite{BarhoumiModStein} :

\begin{theorem} 

Let $ \Phi $ be a bounded function on $ \Rr_+ $ and $ \gamma > 0 $. Define the distribution $ \Qs\prth{ \Phi, \gamma } $~by
\begin{align*}%$
Q_\gamma \sim  \Qs\prth{ \Phi, \gamma }  \ \  \Longleftrightarrow  \ \  \Pp_{ Q_\gamma } := \frac{ \Phi\prth{ \frac{P_\gamma}{\gamma} }  }{ \Esp{ \Phi\prth{ \frac{P_\gamma}{\gamma} } } }  \bullet \Pp_{ P_\gamma }  
\end{align*}
where $ P_\gamma \sim \Ps(\gamma) $.

Then, if $ \Qe_{\gamma_n}(\Phi) \sim  \Qs\prth{ \Phi, \gamma_n } $, we have
\begin{align*}%$
(\Qe_{\gamma_n}(\Phi), \gamma_n ) \cvmodp{n }{ + \infty } \Phi
\end{align*}
\end{theorem}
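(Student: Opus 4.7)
The plan is to compute the Laplace ratio of $\Qe_{\gamma_n}(\Phi)$ against $P_{\gamma_n}$ directly from the definition of the penalisation, and then recognise the numerator as a Poisson expectation under an exponentially tilted measure, reducing everything to a weak law of large numbers for the Poisson distribution.

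Concretely, the definition of the bias gives
\begin{align*}
\frac{\Esp{x^{\Qe_{\gamma_n}(\Phi)}}}{\Esp{x^{P_{\gamma_n}}}} = \frac{1}{\Esp{\Phi\prth{P_{\gamma_n}/\gamma_n}}} \cdot \frac{\Esp{x^{P_{\gamma_n}} \Phi\prth{P_{\gamma_n}/\gamma_n}}}{\Esp{x^{P_{\gamma_n}}}}.
\end{align*}
I will then use the standard tilting identity for the Poisson law: for $x > 0$, the density $x^k/\Esp{x^{P_{\gamma_n}}} = x^k e^{-\gamma_n(x-1)}$ against $\Pp_{P_{\gamma_n}}$ changes the parameter from $\gamma_n$ to $x\gamma_n$. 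This is a one-line check from $\Prob{P_{\gamma_n} = k} = e^{-\gamma_n}\gamma_n^k/k!$. Consequently, letting $\widetilde{P}_{x\gamma_n} \sim \Ps(x\gamma_n)$, the second factor above equals $\Esp{\Phi\prth{\widetilde{P}_{x\gamma_n}/\gamma_n}}$, so the whole ratio becomes
\begin{align*}
\frac{\Esp{x^{\Qe_{\gamma_n}(\Phi)}}}{\Esp{x^{P_{\gamma_n}}}} = \frac{\Esp{\Phi\prth{\widetilde{P}_{x\gamma_n}/\gamma_n}}}{\Esp{\Phi\prth{P_{\gamma_n}/\gamma_n}}}.
\end{align*}

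Now $\widetilde{P}_{x\gamma_n}/\gamma_n$ has mean $x$ and variance $x/\gamma_n \to 0$, so by Chebyshev it converges in probability to $x$; likewise $P_{\gamma_n}/\gamma_n \to 1$. Assuming $\Phi$ is continuous at these points and recalling it is bounded, dominated convergence yields $\Esp{\Phi\prth{\widetilde{P}_{x\gamma_n}/\gamma_n}} \to \Phi(x)$ and $\Esp{\Phi\prth{P_{\gamma_n}/\gamma_n}} \to \Phi(1) = 1$ (the last identity holding as part of the normalisation inherent to the mod-Poisson setting, as already noted in Definition \ref{Def:ModPoissonCv}). This gives the pointwise limit $\Phi(x)$.

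The one point requiring genuine care, rather than formal manipulation, is the locally uniform convergence required by Definition \ref{Def:ModPoissonCv}. For $x$ in a compact $K \subset \Rr_+$, I would split
\begin{align*}
\abs{\Esp{\Phi\prth{\widetilde{P}_{x\gamma_n}/\gamma_n}} - \Phi(x)} \leq \varepsilon + 2\norm{\Phi}_\infty \, \Prob{\abs{\widetilde{P}_{x\gamma_n}/\gamma_n - x} \geq \delta},
\end{align*}
where $\delta$ comes from the uniform continuity of $\Phi$ on a neighbourhood of $K$, and the tail is bounded uniformly in $x \in K$ by $\sup_{x \in K} x/(\gamma_n \delta^2) \to 0$ via Chebyshev. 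This is therefore the only step with real content — all the rest is algebraic manoeuvring — and the argument works provided $\Phi$ is continuous, which is the natural setting for the statement given the definition of the mod-Poisson convergence.
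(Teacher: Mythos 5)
Your proposal is correct and follows essentially the same route as the paper: the Poisson exponential-tilting identity reduces the ratio to $\Esp{\Phi(P_{x\gamma_n}/\gamma_n)}/\Esp{\Phi(P_{\gamma_n}/\gamma_n)}$, and continuity, boundedness, dominated convergence and $P_{x\gamma_n}/\gamma_n \to x$ give the limit $\Phi(x)/\Phi(1) = \Phi(x)$. Your explicit Chebyshev argument for the locally uniform convergence is a slightly more careful rendering of a step the paper leaves implicit, and your remark that continuity and $\Phi(1)=1$ must be imported from the mod-Poisson setting matches the paper's implicit use of these hypotheses.
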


For the reader's convenience, we remind the proof of this result. 

\begin{proof} Recall the change of probability, for $ x, \gamma > 0 $
\begin{align}\label{PoissonExpBiais}%$
\frac{ x^{ P_{\gamma} } }{ \Esp{ x^{ P_{\gamma} } } }  \bullet \Pp_{  P_{\gamma}   } = \Pp_{ P_{x \gamma }  }
\end{align}
easily seen writing, for all $ \theta \in \Rr $
\begin{align*}%$
\frac{  \Esp{ x^{  P_{\gamma } } \, e^{ i \theta P_\gamma    }   } }{  \Esp{ x^{ P_\gamma  } } } = \frac{ e^{\gamma \prth{ x e^{ i \theta } - 1 }    }  }{ e^{\gamma \prth{ x - 1 }    } } = e^{ \gamma x \prth{ e^{ i \theta  } - 1 } }  =  \Esp{ e^{ i \theta P_{x\gamma}   } }  
\end{align*}

Then, we have
\begin{align*}%$
\frac{ \Esp{x^{ \Qe_{\gamma_n}(\Phi) } } }{\Esp{ x^{ P_{\gamma_n} } } } =  \frac{ \Esp{  \Phi\prth{ \frac{ P_{\gamma_n} }{\gamma_n} } x^{ P_{\gamma_n} } } }{  \Esp{ x^{ P_{\gamma_n} } } \Esp{  \Phi\prth{ \frac{ P_{\gamma_n} }{\gamma_n} } } } =  \frac{   \Esp{  \frac{ \vphantom{ \big ( } x^{ P_{\gamma_n} } }{  \Esp{  \vphantom{ \big ( } x^{ P_{\gamma_n} } } }   \Phi\prth{ \frac{ P_{\gamma_n} }{\gamma_n} }    }  }{ \Esp{  \Phi\prth{ \frac{ P_{\gamma_n} }{\gamma_n} } } }  = \frac{ \Esp{  \Phi\prth{ \frac{ P_{x\gamma_n} }{\gamma_n}  } } }{ \Esp{  \Phi\prth{ \frac{ P_{\gamma_n} }{\gamma_n} } }}
\end{align*}

By continuity and boundedness of $ \Phi $, and using dominated convergence and the fact that 
\begin{align*}%$
\frac{ P_{x \gamma_n } }{\gamma_n }  \cvlaw{n}{ + \infty } x 
\end{align*}
one gets, locally uniformly in $ x \in \Rr_+ $ (and in particular for $ x = 1 $)
\begin{align*}%$
\Esp{  \Phi\prth{ \frac{ P_{x\gamma_n} }{\gamma_n}  } } \tendvers{n}{ + \infty } \Phi(x)
\end{align*}

As $ \Phi(1) = 1 $, one finally gets the result.
\end{proof}

% ==

\begin{example} Continuing example \ref{Ex:LimitePoissonGenerale}, we see that in the case of a function given by \eqref{LimitePoissonGenerale}, i.e. 
\begin{align*}%$
\Phi(x) = \prod_{k \geq 1 }  (1 + p_k (x-1) ) e^{ - p_k (x - 1) }
\end{align*}
one has, for all $ x \in \Rr_+ $
\begin{align*}%$
0 \leq \Phi(x) \leq 1
\end{align*}
and the last theorem applies. The positivity of $ \Phi $ on $ \Rr_+ $ is obvious, and as $ 1 + y \leq e^y $ for all $ y \in \Rr $, setting $ y = p_k (x-1) $ one has $ (1 + p_k (x-1) ) e^{ - p_k (x - 1) } \leq 1 $ which gives the upper bound.
\end{example}

A probabilistic interpretation of mod-Poisson convergence follows from this last theorem~: if $ (Z_n)_n $ is a sequence of random variables converging in the mod-Poisson sense at speed $ (\gamma_n)_n $ to $ \Phi $, one may think of the distribution of $ Z_n $ as close to the distribution of $ \Qe_{\gamma_n}(\Phi) $. The limiting function $ \Phi $, once correctly scaled, would thus be a particular correction to the Poisson distribution that would allow a refined speed of convergence in the CLT, and mod-Poisson convergence could thus be understood as a certain second-order convergence in distribution. This is the case in the mod-Gaussian setting (see \cite{BarhoumiModStein}), but also in the mod-Poisson setting since \cite[II.6 (20)]{Tenenbaum} gives
\begin{align*}%$
\Prob{ \omega(U_n) = k } = \Prob{ P_{\log \log n} = k } \prth{   \Phi_\omega \prth{ \frac{k}{\log\log n} } + O\prth{ \frac{k}{ (\log \log n)^2 } }  }
\end{align*}
uniformly in $ n \geq 3 $ and $ k \in \intcrochet{ 1, (2 - \delta)\log\log n } $ for all $ \delta > 0 $. Moreover, using \ref{Lemma:OrderPhiC}, one has $ \norm{\Phi_\omega' }_\infty < \infty $, which implies using the Gaussian CLT for $ P_\gamma $
\begin{align*}%$
\Esp{ \abs{ \Phi_\omega \prth{ \frac{ P_{\gamma} }{\gamma} } -1 } } = \Esp{ \abs{ \Phi_\omega \prth{ \frac{ P_{\gamma} }{\gamma} } - \Phi_\omega(1) } } \leq  \norm{\Phi_\omega' }_\infty \Esp{ \abs{ \frac{ P_{\gamma} }{\gamma} - 1 } }  = O\prth{ \frac{ 1 }{\sqrt{\gamma } } }
\end{align*}

This last result can hence be transformed into
\begin{align*}%$
\Prob{ \omega(U_n) = k } = \Prob{ P_{\log \log n} = k } \prth{   \frac{ \Phi_\omega \prth{ \frac{k}{\log\log n} } }{  \Esp{ \Phi_\omega \prth{ \frac{ P_{\log \log n} }{\log\log n} } } } + O\prth{ \frac{ \Esp{ \Phi_\omega \prth{ \frac{  P_{\log \log n}  }{\log\log n} }} }{ \sqrt{\log \log n } } } + O\prth{ \frac{k}{ (\log \log n)^2 } }  }
\end{align*}
that is
\begin{align*}%$
\Prob{ \omega(U_n) = k } %& = \Prob{ \Qe_{\log\log n}(\Phi_\omega) = k }  + O\prth{ \frac{ \Prob{ P_{\log \log n} = k } }{ \sqrt{\log \log n } } }  \\ 
 						& = \Prob{ \Qe_{\log\log n}(\Phi_\omega) = k }  + O\prth{ \frac{ (\log \log n)^{k - 1/2 } }{ k! \ \log n}  } 
\end{align*}

% ==
\section{A model that converges in the mod-Poisson sense}

In order to construct a probabilistic model for $ \omega(U_n) $ that converges in the mod-Poisson sense to $ \Phi_\omega $, we remind some classical probabilistic biases.

% ==
\subsection{Classical biases and changes of probability}

A fundamental operation in probability theory is the change of probability by means of a weight on the initial probability measure. This weight is called \textit{bias} or \textit{penalisation} and we will use undifferently both terminology. 

\begin{definition}[Bias/penalisation of measure] Let $ X $ be a real random variable in the probability space endowed with the measure $ \Pp $ and denote by $ \Pp_X $ its law. For $ f \in L^1(\Pp_X) $, $ f \geq 0 $, the penalisation (or bias) of $ \Pp_X $ by $f$ is the probability measure $ \Pp_Y $ denoted by
\begin{align*}%$ 
\Pp_{ Y } := \frac{ f(X) }{ \Esp{ f(X) } } \bullet \Pp_{ X }
\end{align*}
\end{definition}

Classical bias in probability theory allow to understand ``pathwise transformations'' induced by such a transformation.

\begin{example} The most classical change of probability concerns the passage from $ \Ns(0, 1) $ to $ \Ns(\mu, 1) \eqlaw \mu + \Ns(0, 1) $. Indeed, if $ X \sim \Ns(0, 1) $, one easily checks that
\begin{align*}%$
\Pp_{ X + \mu } = \frac{ e^{ \mu X   } }{ \Esp{ e^{ \mu X  } } } \bullet \Pp_{ X } = e^{ \mu X - \mu^2 / 2 } \bullet \Pp_{ X }
\end{align*}

Hence, in the Gaussian setting, an exponential bias is equivalent to a translation of the canonical evaluation. Note that the Poisson counterpart of this exponential bias was given in equation \eqref{PoissonExpBiais}. 
\end{example}

A classical transform in probability theory is made using $ f : x \mapsto x $ when the random variable is positive.

\begin{definition}[Size-bias transform] Let $ X \geq 0 $ be a random variable with expectation $ \mu := \Esp{X} < \infty $. A random variable $ X^{(s)} $ is said to be a \textit{size-bias transform} of $ X $ if, for all real functions $f$ such that $ \Esp{ \abs{ X f(X) } } < \infty $
\begin{align*}%$
\Esp{  X f(X)  } = \mu \Esp{  f\prth{ X^{(s)} }  } 
\end{align*}

An equivalent definition is thus 
\begin{align}\label{SizeBiais}%$
\Pp_{ X^{(s)} } := \frac{X}{\Esp{  X  } } \bullet \Pp_X 
\end{align}
\end{definition}

% ==

\begin{example} A classical change of measure for a random walk is given by its size-bias coupling, i.e. given $ (X_k)_k $ a sequence of i.i.d. positive random variables of expectation $ \Esp{X_k} := 1 $ defined on the same probability space, the random walk $ (S_n)_n $ of increments $ (X_k)_k $ is given by
\begin{align*}%$
S_n := \sum_{k = 1}^n X_k 
\end{align*}

The size-bias transform of $ S_n $ is the random variable $ S_n^{(s)} $ whose law is given by 
\begin{align*}%$
\Pp_{ S_n^{(s)} } := \frac{S_n }{ n  } \bullet \Pp_{S_n }
\end{align*}
\end{example}

A pathwise construction of such a random variable is implied by the following

\begin{lemma}[Size-bias coupling of an independent sum]\label{CouplageBiaisTaille} Let $ (Y_k)_k $ be a sequence of independent positive integrable random variables, independent of $ (X_k)_k $ and having the same distribution as $ (X_k)_k $ and let $ I \in \intcrochet{1, n} $ be a random index independent of $ (X_k)_k $ and $ (Y_k)_k $ of law given by
\begin{align*}%$
\Prob{ I = k } = \frac{ \Esp{X_k } }{ \sum_{ \ell = 1}^n \Esp{X_\ell } }
\end{align*}

Then, 
\begin{align*}%$
S_n^{(s)} \eqlaw S_n - X_I + Y^{(s)}_I 
\end{align*}
and in particular, if $ (Y_k)_k $ is defined on the same probability space as $ (X_k)_k $, one has a natural coupling $ \prth{S_n, S_n^{(s)} } $.
\end{lemma}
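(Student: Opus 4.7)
The plan is to verify the distributional identity by testing both sides against an arbitrary bounded measurable function $f$, i.e.\ by showing $\Esp{f(S_n^{(s)})} = \Esp{f(S_n - X_I + Y^{(s)}_I)}$. Write $\mu_k := \Esp{X_k}$ and $\mu := \sum_{\ell=1}^n \mu_\ell = \Esp{S_n}$. Unwinding the size-bias definition \eqref{SizeBiais} of $S_n^{(s)}$ gives
\begin{align*}
\Esp{f(S_n^{(s)})} = \frac{\Esp{S_n f(S_n)}}{\mu} = \frac{1}{\mu} \sum_{k = 1}^n \Esp{X_k f(S_n)}.
\end{align*}
The guiding idea is to isolate the $k$-th increment inside the $k$-th term and apply the size-bias definition coordinate-wise; the role of the auxiliary sequence $(Y_k)_k$ is simply to provide an independent copy of each $X_k^{(s)}$ that can be plugged back into the sum without disturbing the law of the other summands.

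For each $k$, set $S_n^{(k)} := S_n - X_k = \sum_{\ell \neq k} X_\ell$. Since the $X_\ell$'s are independent, $X_k$ is independent of $S_n^{(k)}$, so conditioning on $S_n^{(k)}$ and using the very definition of the size-bias transform applied to $X_k$ (against the function $y \mapsto f(\,\cdot\, + y)$) gives
\begin{align*}
\Esp{X_k f(S_n)} = \Esp{X_k f(X_k + S_n^{(k)})} = \mu_k \, \Esp{f(X_k^{(s)} + S_n^{(k)})},
\end{align*}
where $X_k^{(s)}$ is taken independent of $S_n^{(k)}$. Summing, this yields
\begin{align*}
\Esp{f(S_n^{(s)})} = \sum_{k=1}^n \frac{\mu_k}{\mu}\, \Esp{f(X_k^{(s)} + S_n^{(k)})}.
\end{align*}

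Now I compute the right-hand side of the claimed identity by conditioning on $I$. Since $I$ is independent of $(X_k)_k$ and $(Y_k)_k$, and on $\ensemble{I = k}$ one has $S_n - X_I + Y_I^{(s)} = S_n^{(k)} + Y_k^{(s)}$,
\begin{align*}
\Esp{f(S_n - X_I + Y_I^{(s)})} = \sum_{k=1}^n \Prob{I = k} \, \Esp{f(S_n^{(k)} + Y_k^{(s)})} = \sum_{k=1}^n \frac{\mu_k}{\mu}\, \Esp{f(S_n^{(k)} + Y_k^{(s)})}.
\end{align*}
Because $Y_k \eqlaw X_k$ and is independent of everything else, the size-bias transform $Y_k^{(s)}$ has the same law as $X_k^{(s)}$, and it is independent of $S_n^{(k)}$ just as the freshly built $X_k^{(s)}$ was in the previous display. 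Therefore $Y_k^{(s)} + S_n^{(k)} \eqlaw X_k^{(s)} + S_n^{(k)}$ term-by-term, and the two expressions coincide, proving $S_n^{(s)} \eqlaw S_n - X_I + Y_I^{(s)}$.

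There is no real obstacle here: the argument is the standard one-step size-bias construction for independent sums, and its only subtle point is keeping track of the independence structure so that the auxiliary variables $(Y_k)_k$ and $I$ can be substituted without affecting the joint law of the remaining summands. The final coupling remark is immediate: when the $(Y_k)_k$ live on the same probability space as the $(X_k)_k$, the formula $S_n^{(s)} := S_n - X_I + Y_I^{(s)}$ provides an explicit pathwise realisation of the size-biased sum alongside $S_n$.
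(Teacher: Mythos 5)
Your proposal is correct and follows essentially the same route as the paper: expand $\Esp{S_n f(S_n)}$ term by term, apply the size-bias transform coordinate-wise to $X_k$ using independence of $X_k$ from $S_n - X_k$, and recognise the resulting mixture over $k$ as an expectation over the independent index $I$. Your version merely spells out the conditioning on $S_n^{\bracket{-k}}$ and the substitution $Y_k^{(s)} \eqlaw X_k^{(s)}$ a little more explicitly than the paper does.
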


For the reader's convenience, we recall the proof of this lemma (see also e.g. \cite[pp 78-79]{ArratiaBarbourTavare}).

% ==

\begin{proof} Let $ f $ be a bounded measurable function and $ S_n^{ \bracket{ - k } } := \sum_{\ell \neq k} X_\ell $. Then, by independence,
\begin{align*}%$
\Esp{ f\prth{ S_n^{(s)} } } & := \frac{1}{ \Esp{ S_n } } \Esp{ S_n  f( S_n ) } = \frac{1}{ \Esp{ S_n } } \sum_{k = 1}^n \Esp{ X_k  f( S_n ) } \\
							& = \frac{1}{ \Esp{ S_n } } \sum_{k = 1}^n \Esp{ X_k  f\prth{ S_n^{ \bracket{ - k } } + X_k  } } \\
							& = \frac{1}{ \Esp{ S_n } } \sum_{k = 1}^n \Esp{ X_k } \Esp{  f\prth{ S_n^{ \bracket{ - k } } + Y^{(s)}_k  } } \\
							& = \Esp{  f\prth{ S_n^{ \bracket{ - I } } + Y^{(s)}_I  } } = \Esp{  f\prth{ S_n - X_I + Y^{(s)}_I  } }
\end{align*}
\end{proof}

% ==

A last type of useful bias concerns the exponential bias of a sum of independent terms, and in particular Bernoulli random variables. 
\begin{lemma}[Exponential bias of an independent sum]\label{CouplageBiaisExp} Let $ (Y_k)_k $ be a sequence of independent random variables. Define, for a certain $ x > 0 $,
\begin{align*}%$
S_n  & := \sum_{k = 1}^n Y_k \\
\Pp_{S_n(x)} & =  \frac{ x^{S_n} }{ \Esp{x^{S_n} } } \bullet \Pp_{ S_n }
\end{align*}
and suppose that $ \Esp{x^{Y_k} } < \infty $ for all $ k \geq 1, \ x \in \Rr_+ $. Then, 
\begin{align*}%$
S_n(x) \eqlaw \sum_{k = 1}^n Y_k(x)
\end{align*}
with 
\begin{align*}%$
\Pp_{Y_k(x)} = \frac{ x^{Y_k} }{ \Esp{x^{Y_k} } } \bullet \Pp_{ Y_k }
\end{align*}

In particular, if $ (B_k)_k $ is a sequence of independent $ \ensemble{0, 1} $-Bernoulli random variables, each of probability $ p_k $ to be equal to $ 1 $, then, 
\begin{align*}%$
\Prob{ B_k(x) = 1 } = p_k(x) := \frac{x p_k}{x p_k + 1 - p_k } = 1 - \Prob{ B_k(x) = 0 }
\end{align*}
\end{lemma}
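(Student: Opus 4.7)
The plan is to show that tilting the joint law of $(Y_1,\dots,Y_n)$ by $x^{S_n}/\Esp{x^{S_n}}$ preserves independence and acts coordinate-wise, producing independent marginals of the form $\Pp_{Y_k(x)}$. Summing then gives the stated equality in distribution.

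Concretely, I would start from the defining identity: for any bounded measurable test function $f$,
\begin{align*}
\Esp{ f\prth{ S_n(x) } } = \frac{ \Esp{ x^{S_n} f(S_n) } }{ \Esp{ x^{S_n} } }.
\end{align*}
Rather than working directly at the level of $S_n$, I would first prove the stronger \emph{joint} statement, i.e. compute $\Esp{ \prod_k f_k\prth{ Y_k } }$ under the tilted measure for arbitrary bounded measurable $f_1,\dots,f_n$. Using the factorisation $x^{S_n} = \prod_k x^{Y_k}$, the assumed finiteness $\Esp{x^{Y_k}}<\infty$ for every $k$ (which gives $\Esp{x^{S_n}} = \prod_k \Esp{x^{Y_k}}$ by independence), and then independence of the $(Y_k)$ under $\Pp$, one obtains
\begin{align*}
\frac{ \Esp{ \prod_k f_k(Y_k) x^{S_n} } }{ \Esp{ x^{S_n} } } = \prod_{k=1}^n \frac{ \Esp{ f_k(Y_k) x^{Y_k} } }{ \Esp{ x^{Y_k} } } = \prod_{k=1}^n \Esp{ f_k\prth{ Y_k(x) } }.
\end{align*}
This identity says two things simultaneously: under the tilted probability, the coordinates $Y_k$ remain independent, and the $k$-th marginal is exactly $\Pp_{Y_k(x)}$. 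Specialising $f_k = \mathrm{id}$ on a test function $g$ of $\sum_k Y_k$, i.e. taking $\prod_k f_k(Y_k) = g(S_n)$, gives the announced equality $S_n(x) \eqlaw \sum_k Y_k(x)$.

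For the Bernoulli specialisation, a direct computation yields $\Esp{ x^{B_k} } = (1-p_k) + x p_k$, hence
\begin{align*}
\Prob{ B_k(x) = 1 } = \frac{ x \, \Prob{B_k = 1} }{ \Esp{ x^{B_k} } } = \frac{ x p_k }{ x p_k + 1 - p_k },
\end{align*}
as claimed. There is no real obstacle in this argument: the only technical point is the integrability assumption $\Esp{x^{Y_k}}<\infty$, which is made explicitly and ensures both that the tilt is well-defined and that the product of individual tilts factorises as required.
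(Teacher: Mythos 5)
Your argument is correct and rests on exactly the same key mechanism as the paper's: the factorisation $x^{S_n}=\prod_k x^{Y_k}$ together with independence of the $Y_k$. The paper computes the tilted generating function $\Esp{y^{S_n(x)}}=\Esp{(xy)^{S_n}}/\Esp{x^{S_n}}$ and factorises it directly, whereas you prove the (slightly stronger) statement that the tilted joint law of $(Y_1,\dots,Y_n)$ is the product $\bigotimes_k\Pp_{Y_k(x)}$ and then push forward under $\sum$; these are two phrasings of the same computation. One small imprecision: the sentence ``specialising $f_k=\mathrm{id}$ \dots\ taking $\prod_k f_k(Y_k)=g(S_n)$'' does not parse as written, since a general $g(\sum_k y_k)$ is not a product $\prod_k f_k(y_k)$. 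The correct way to close your argument is to note that the tilting density $x^{S_n}/\Esp{x^{S_n}}$ is $\sigma(S_n)$-measurable, so the tilted law of $S_n$ is the image under $(y_1,\dots,y_n)\mapsto\sum_k y_k$ of the tilted joint law you have just identified as the product measure, which gives $S_n(x)\eqlaw\sum_k Y_k(x)$ immediately; alternatively, simply choose $f_k(\cdot)=y^{(\cdot)}$ for a parameter $y>0$, which reduces your display to the paper's one-line proof.
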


For self-completeness, we recall the proof of this result.

% ==

\begin{proof} 
Let $ y > 0 $. Then, by independence,
\begin{align*}%$
\Esp{ y^{ S_n(x) } } := \frac{  \Esp{ (xy)^{ S_n } }  }{\Esp{ x^{ S_n } }} = \prod_{k = 1}^n \frac{  \Esp{ (xy)^{ Y_k } }  }{\Esp{ x^{ Y_k } }} =: \prod_{k = 1}^n \Esp{ y^{ Y_k(x) } }
\end{align*}

In particular, 
\begin{align*}%$
\frac{  \Esp{ (xy)^{ B_k } }  }{\Esp{ x^{ B_k } }} = \frac{ p_k xy + 1 - p_k }{p_k x  + 1 - p_k } = 1 + p_k(x) (y-1) = \Esp{ y^{ B_k(x) } } 
\end{align*}
\end{proof}

% ==

\subsection{Estimates on the limiting function} 

Proving theorem \ref{Theorem:ErdosKacImprovement} requires some preparation.

\begin{lemma}\label{Lemma:OrderPhiC} Define 
\begin{align*}%$
\phi_k(x) := \prod_{\ell = 1}^k \prth{ 1 + \frac{x- 1}{ \ell } } e^{ -\frac{x-1}{ \ell } }
\end{align*}

Then, when $ k \to + \infty $
\begin{align*}%$
\sup_{\Rr_+} \abs{ \phi'_k} = O(1)
\end{align*}
\end{lemma}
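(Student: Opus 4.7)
The plan is to turn the logarithmic derivative of $\phi_k$ into an explicit sum that can be controlled term by term, combined with a crude upper bound on $\phi_k$ itself coming from a factor-by-factor estimate.

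First I would establish the pointwise bound $0 \leq \phi_k(x) \leq x\, e^{-(x-1)}$ for all $x \geq 0$ and all $k \geq 1$. Writing $\phi_k(x) = \prod_{\ell=1}^k f_\ell(x)$ with $f_\ell(x) := (1+(x-1)/\ell)\,e^{-(x-1)/\ell}$, I would observe that the one-variable function $u \mapsto (1+u)e^{-u}$ attains its maximum $1$ at $u = 0$ and is positive for $u > -1$. Setting $u = (x-1)/\ell$ with $\ell \geq 2$ gives $u \geq -1/2 > -1$ for $x \geq 0$, hence $0 \leq f_\ell(x) \leq 1$. The factor $\ell = 1$ then leaves the stated envelope $f_1(x) = xe^{-(x-1)}$.

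Next I would compute, for $x > 0$,
\begin{align*}
\frac{\phi_k'(x)}{\phi_k(x)} = \sum_{\ell=1}^k \left[\frac{1}{\ell+x-1} - \frac{1}{\ell}\right] = (1-x)\sum_{\ell=1}^k \frac{1}{\ell(\ell+x-1)},
\end{align*}
and isolate the term $\ell = 1$, which contributes $(1-x)/x$. For the tail I would use the trivial bound $\frac{1}{\ell(\ell+x-1)} \leq \frac{1}{\ell(\ell-1)}$, valid for $\ell \geq 2$ and $x \geq 0$, and the telescoping identity $\sum_{\ell \geq 2} \frac{1}{\ell(\ell-1)} = 1$. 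Combining with step one yields, for $x > 0$,
\begin{align*}
\abs{\phi_k'(x)} \leq \abs{1-x}(1+x)\,\frac{\phi_k(x)}{x} \leq \abs{x^2-1}\,e^{-(x-1)},
\end{align*}
and the right-hand side is a fixed continuous function on $[0,\infty)$, bounded on compacta and decaying at infinity, hence $\leq C$ for some absolute constant $C$ independent of $k$.

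Finally I would handle $x = 0$ separately since $\phi_k(0) = 0$ makes the logarithmic derivative singular. Factoring $\phi_k(x) = x e^{-(x-1)} \tilde\phi_k(x)$ with $\tilde\phi_k := \prod_{\ell=2}^k f_\ell$, direct differentiation gives $\phi_k'(0) = e \cdot \tilde\phi_k(0) = e \cdot k^{-1} e^{H_k - 1} = e^{H_k}/k$, which is $O(1)$ since $H_k = \log k + \gamma + o(1)$; alternatively, one extends the bound of the previous paragraph by continuity. I do not expect any real obstacle here: the only subtle point is the joint pole/zero behaviour at $x = 0$, which the factorisation $\phi_k = xe^{-(x-1)}\tilde\phi_k$ cleanly resolves.
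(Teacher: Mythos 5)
Your proof is correct and follows essentially the same route as the paper's: bound $\phi_k(x)\le xe^{1-x}$ via the factor-wise estimate $(1+u)e^{-u}\le 1$, compute the logarithmic derivative $\phi_k'/\phi_k=(1-x)\sum_{\ell\le k}\frac{1}{\ell(\ell+x-1)}$, isolate the $\ell=1$ term, bound the tail by the telescoping series, and arrive at $|\phi_k'(x)|\le |1-x^2|e^{1-x}$. The only addition is your explicit treatment of $x=0$, which the paper leaves implicit (its $1/x$ factor is harmlessly cancelled by the $x$ in the bound on $\phi_k$); that is a small point of extra care, not a genuinely different argument.
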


% ==

\begin{proof}

We have for all $ x \geq 0 $ and all $ \ell \geq 1 $
\begin{align*}%$
\prth{ 1 + \frac{x - 1}{\ell	} } e^{ - \frac{x - 1}{\ell	} } \leq 1
\end{align*}
hence 
\begin{align*}%$
\phi_k(x) =  x e^{1 - x} \prod_{\ell = 2}^k \prth{ 1 + \frac{x - 1}{\ell	} } e^{ - \frac{x - 1}{\ell	} }  \leq x e^{1 - x} 
\end{align*}

Moreover, 
\begin{align*}%$
\phi'_k(x) = \phi_k(x) (1- x) \prth{ \sum_{ \ell = 1 }^k  \frac{1}{\ell( \ell - 1 + x) } }
\end{align*}
from what we deduce that for all $ x \geq 0 $
\begin{align*}%$
\abs{ \phi'_k(x) }  & = \phi_k(x) \abs{1 - x} \prth{ \frac{1}{x } + \sum_{ \ell = 2 }^k \frac{1}{\ell(\ell - 1 + x) } } \\
                    & \leq e^{1 - x} \, x \abs{1 - x} \prth{  \frac{1}{x } + \sum_{ \ell = 2 }^k \frac{1}{\ell(\ell - 1 ) } } = e^{1 - x} \, x \abs{1 - x} \prth{  \frac{1}{x } + 1 - \frac{1}{k} } \\
                    & \leq e^{1-x} \, x \abs{1 - x} \prth{  \frac{1}{x } + 1 } =  e^{1 - x} \abs{ 1 - x^2 } \\
                    & \leq e
\end{align*}
after a study of $ x \mapsto e^{1 - x} \abs{ 1 - x^2 } $.
\end{proof}

% ==

\begin{lemma}\label{Lemma:OrderDeltaPhiC} Set $ \phi := \Phi_C $. Then, when $ k \to + \infty $
\begin{align*}%$
\sup_{\Rr_+} \abs{ \phi_k -  \phi } & = O \prth{ \frac{1}{ \sqrt{k} } }  
%\sup_{\Rr_+} \abs{ \phi_k -  \phi } & = O_\varepsilon\prth{ \frac{1}{k^{ 1/2 - \varepsilon } } } \ \ \forall \varepsilon \in ]0, 1/2[
\end{align*}
\end{lemma}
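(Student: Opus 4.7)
The plan is to factor out $\phi_k$ and estimate the remaining tail product. Write
\begin{align*}
\phi(x) - \phi_k(x) = \phi_k(x)\bigl(R_k(x) - 1\bigr), \qquad R_k(x) := \prod_{\ell > k}\Bigl(1 + \tfrac{x-1}{\ell}\Bigr) e^{-(x-1)/\ell}.
\end{align*}
From Lemma \ref{Lemma:OrderPhiC} and its proof, every factor $(1+(x-1)/\ell)e^{-(x-1)/\ell}$ lies in $[0,1]$ on $\Rr_+$, so $\phi_k(x) \leq xe^{1-x}$, and passing to the limit, also $\phi(x) \leq xe^{1-x}$. This exponential envelope will control large $x$, and I will handle small $x$ with a Taylor expansion of the tail.

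The key dichotomy will be a case split at $x = \sqrt{k}$. For $x > \sqrt{k}$ (assuming $k$ large enough so $\sqrt{k}>1$), the function $xe^{1-x}$ is decreasing, hence
\begin{align*}
|\phi(x) - \phi_k(x)| \leq \phi(x) + \phi_k(x) \leq 2\sqrt{k}\,e^{1-\sqrt{k}} = o(k^{-1/2}),
\end{align*}
which is far better than the claimed rate.

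For $x \leq \sqrt{k}$, every $\ell > k$ satisfies $|x-1|/\ell \leq 2/\sqrt{k}$, which is $<1/2$ for $k$ large. Using the elementary estimate $|\log(1+y)-y| \leq y^2$ for $|y| \leq 1/2$, I get
\begin{align*}
|\log R_k(x)| \leq \sum_{\ell > k} \frac{(x-1)^2}{\ell^2} \leq \frac{(x-1)^2}{k} \leq 1,
\end{align*}
so the elementary inequality $|e^a - 1| \leq |a|\,e^{|a|}$ yields $|R_k(x) - 1| \leq e\,(x-1)^2/k$. Combining with $\phi_k(x) \leq xe^{1-x}$ gives
\begin{align*}
|\phi(x) - \phi_k(x)| \leq \frac{e}{k}\, xe^{1-x}(x-1)^2 \leq \frac{M}{k}
\end{align*}
where $M := \sup_{x \geq 0} xe^{1-x}(x-1)^2 < \infty$ (the function is continuous on $[0,\infty)$ and vanishes at $0$ and $\infty$). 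This is already $O(1/k)$, stronger than needed.

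The main obstacle is the passage between the two regimes: on $\{x \leq \sqrt{k}\}$ the small-$y$ Taylor expansion is valid, but we must verify that the resulting prefactor $xe^{1-x}(x-1)^2$ remains bounded as $x$ grows up to $\sqrt k$ — this is where the Gaussian-type decay $e^{1-x}$ built into $\phi_k$ does the work. On $\{x > \sqrt{k}\}$ we abandon the tail product expansion entirely and rely on the a priori envelope bound. Taking the maximum over the two regimes yields the claimed $O(1/\sqrt{k})$ rate (indeed, $O(1/k)$).
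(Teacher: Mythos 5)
Your proof is correct, and in fact it establishes the stronger bound $O(1/k)$. The paper splits the real line at $x=1$: on $[0,1]$ it uses a Taylor/integral expansion of $-\log\psi_k$ (where $\psi_k$ is the tail product, your $R_k$) together with $\phi_k\leq 1$, yielding $O(1/k)$; on $[1,+\infty)$ it interprets $\psi_k(y+1)$ probabilistically as $\Prob{\min_{\ell>k}\ell Z_\ell\geq y}$ with $Z_\ell$ distributed as the sum of two independent exponentials, applies Markov's inequality, and optimizes a truncation level $\eta$, which yields only $O(1/\sqrt{k})$. Your argument instead splits at $x=\sqrt{k}$ and reuses the single elementary mechanism (Taylor expansion of $\log(1+y)-y$) throughout the finite regime, using the full decaying envelope $\phi_k(x)\leq xe^{1-x}$ rather than just its boundedness; the left-over regime $x>\sqrt{k}$ is then killed outright by the envelope. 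This is more uniform, more elementary (no auxiliary random variables), and sharper: the paper's approach on $[1,\infty)$ concedes a factor since it compresses the entire unbounded regime into one Markov bound, whereas your moving split point lets the inequalities $|x-1|^2/k\leq 1$ and $\sup_x xe^{1-x}(x-1)^2<\infty$ do all the work. The paper's probabilistic representation of the tail product is elegant in its own right and may generalize more readily to other Bernoulli-type limiting functions, but for the quantitative purpose of this lemma your route is cleaner and gives a better rate.
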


% ==

\begin{proof}
We split the proof into different cases : $ x \in \crochet{0, 1} $ and $ x \in [1, + \infty [ $.

$ $ 

\noindent $ \bullet $ \underline{ $ x \in \crochet{0, 1} $ :} We have $ \phi_k - \phi = \phi_k (1 - \psi_k)  $ where 
\begin{align*}%$
\psi_k(x) := \prod_{\ell \geq k + 1} \prth{ 1 - \frac{1 - x}{\ell	} } e^{  \frac{1 - x}{\ell	} }
\end{align*}
hence $ \phi_k(x) - \phi(x) \leq 1 - \psi_k(x) $ for all $ x \in \crochet{0, 1} $, since $ \phi_k(x) \leq 1 $ and $  1 - \psi_k(x) \geq 0 $. But
\begin{align*}%$
- \log \psi_k(x)  & = - \sum_{ \ell \geq k + 1 } \crochet{ \frac{1 - x}{\ell } + \log\prth{ 1 - \frac{1-x}{\ell} }  } \\
                  & = - \sum_{ \ell \geq k + 1 } \int_0^{1 - x} \prth{ \frac{1}{\ell } - \frac{1}{ \ell ( 1 - u / \ell ) } }\, du \\
                  & = \sum_{ \ell \geq k + 1 } \frac{1}{\ell^2} \int_0^{1 - x}  \frac{u}{ 1 - u/\ell} \, du \\
                  & =  \sum_{ \ell \geq k + 1 } \frac{1}{\ell^2}  \int_0^1  \frac{u}{ 1 - u / 2} \, du \ \ \ \forall k \geq 1 
\end{align*}

For all $ t \in \crochet{ \ell, \ell + 1} $, we have $ (\ell + 1)^{-2} \leq t^{-2}  \leq \ell^{-2} $ ; integrating this inequality on $  \crochet{ \ell, \ell + 1} $ 
%gives
%%
%%
%%
%\begin{align*}%$
%\frac{1}{ (\ell + 1)^2 } \leq \int_\ell^{\ell + 1} \frac{dt}{t^2} \leq \frac{1}{\ell^2}
%\end{align*}
and summing on $ \ell \geq k $ gives
\begin{align*}%$
\sum_{\ell \geq k} \frac{1}{ (\ell + 1)^2 } \leq \int_\ell^{ + \infty } \frac{dt}{t^2} \leq \sum_{\ell \geq k} \frac{1}{\ell^2}
\end{align*}
%
%
%
%namely
%%
%%
%%
%\begin{align*}%$
%\sum_{\ell \geq k + 1} \frac{1}{ \ell^2 } \leq \frac{1}{k} \leq \frac{1}{k^2} + \sum_{\ell \geq k + 1} \frac{1}{\ell^2}
%\end{align*}

We hence have
\begin{align*}%$
\sum_{\ell \geq k + 1} \frac{1}{ \ell^2 } \leq \frac{1}{k} 
\end{align*}
which implies that for all $ k \geq 1 $
\begin{align*}%$
- \log \psi_k(x)  \leq \frac{C}{k} \ \ \ \ \ \mbox{with } \ \ C :=   \int_0^1  \frac{u}{ 1 - u / 2} \, du <  \infty 
\end{align*}

Last, 
\begin{align*}%$
1 - \psi_k(x) = 1 - e^{- (-\log \psi_k(x) ) } \leq 1 - e^{ -C / k  } \leq \frac{C}{k}
\end{align*}
which gives a stronger result, i.e. $ \sup_{x \in \crochet{0, 1} }\abs{ \phi_k(x) - \phi(x) } = O(1/k) $.

$ $

\noindent $ \bullet $ \underline{ $ x \in \crochet{1, + \infty} $ :} Let $ \ee, \ee' $ be two independent exponentially-distributed random variables, i.e. $ \Prob{\ee \geq x } = e^{-x} $ for all $ x \geq 0 $, and let $ Z $ be such that
\begin{align*}%$
\Prob{ Z \geq x } = (1 + x)e^{-x}
\end{align*}
namely $ Z \eqlaw \ee + \ee' $ since for all $ \lambda \geq 0 $, $ \Esp{ e^{-\lambda Z} } = (1 + \lambda)^{-2} =  \Esp{ e^{ -\lambda(\ee + \ee') } } $.
%
%
%
%\begin{align*}%$
%\Esp{ e^{-\lambda Z} } & = \Esp{ \int_{\lambda Z }^{+\infty} e^{-u} du } = \Esp{ \int_0^{+\infty} \Unens{ u \geq \lambda Z } e^{-u} du } =\int_{\Rr_+} \Prob{ u \geq \lambda Z }  e^{-u} du  \\
%                       & = \lambda \int_{\Rr_+} \prth{ 1 -  (1 + v)e^{-v} } e^{-\lambda v} dv =  \frac{1}{(1 + \lambda)^2 } \\
%                       & = \Esp{ e^{ -\lambda(\ee + \ee') } }
%\end{align*}

Let $ (Z_\ell)_\ell $ be a sequence of i.i.d. random variables distributed as the sum of two independent exponential random variables. Then, for all $ y \geq 0 $
\begin{align*}%$
\psi_k(y + 1) = \prod_{\ell \geq k + 1 } \prth{ 1 + \frac{y }{\ell} } e^{-\frac{y}{\ell} } = \prod_{\ell \geq k + 1} \Prob{ Z_\ell \geq \frac{y}{\ell} } = \Prob{ \min_{\ell \geq k + 1} \!\! \ensemble{ \ell Z_\ell } \geq y }
\end{align*}

This implies that
\begin{align*}%$
1 - \psi_k(y + 1) =  \Prob{ y \geq \min_{\ell \geq k + 1} \!\! \ensemble{ \ell Z_\ell }  } = \Prob{ y \max_{\ell \geq k + 1} \!\! \ensemble{ \frac{1}{\ell Z_\ell} } \geq 1 } \leq y \, \Esp{  \max_{\ell \geq k + 1} \!\! \ensemble{ \frac{1}{\ell Z_\ell} }  }
\end{align*}

As $ y \, \phi_k(y + 1) \leq y(y + 1) e^{-y } \leq 2 + \sqrt{5} $, it is enough to show that 
\begin{eqnarray*}
\Esp{  \max_{\ell \geq k + 1} \!\! \ensemble{ \frac{1}{\ell Z_\ell} }  } = O\prth{ \frac{1}{k^{1/2  } } } 
\end{eqnarray*}

For all $ \eta > 0 $, write
\begin{align*}%$
\Esp{  \max_{\ell \geq k + 1} \!\! \ensemble{ \frac{1}{\ell Z_\ell} }  }   & =    \int_{\Rr_ + } \Prob{ \max_{\ell \geq k + 1} \!\! \ensemble{ \frac{1}{\ell  Z_\ell} } \geq t } dt = \int_{\Rr_ + } 1 - \Prob{ \max_{\ell \geq k + 1} \!\! \ensemble{ \frac{1}{\ell  Z_\ell} } \leq t } dt \\
                  & =  \int_{\Rr_ + } \prth{ 1 - \prod_{\ell \geq k + 1} \Prob{  \frac{1}{\ell  Z_\ell}   \leq t } } dt = \int_{\Rr_ + } \prth{ 1 - \prod_{\ell \geq k + 1} \Prob{  \ell  Z \geq u } } \frac{du}{u^2} \\
                  & \leq \int_0^\eta \prth{ 1 - \prod_{\ell \geq k + 1} \Prob{ \ell Z \geq u } } \frac{du}{u^2} + \int_\eta^{ +\infty } \frac{du}{u^2}\\
                  & =:  \int_0^\eta \prth{ 1 - e^{- u_k(u)} } \frac{du}{u^2} + \frac{1}{\eta }
\end{align*}
where
\begin{align*}%$
u_k(u)   & := - \sum_{ \ell \geq k + 1 }  \log\Prob{ Z \geq t \ell^{- 1} } \\
                        & =  - \sum_{ \ell \geq k + 1 } - \frac{u }{\ell  } + \log\prth{ 1 + \frac{u }{\ell } } \\
                        & =    \sum_{ \ell \geq k + 1 } \frac{1}{\ell^2 } \int_0^u \frac{v}{1 +  \ell^{-1} v } dv \\
                        & \leq    \frac{1}{  k  } \int_0^u v dv  = \frac{u^2}{2 k }
\end{align*}

Hence
\begin{align*}%$
\int_0^\eta \prth{ 1 - e^{- u_k(u)} } \frac{du}{u^2} \leq \frac{\eta}{2 k }
\end{align*}
which finally gives
\begin{align*}%$
\Esp{  \max_{\ell \geq k + 1} \!\! \ensemble{ \frac{1}{\ell Z_\ell} }  }   \leq    \prth{\frac{\eta}{2 k } + \frac{1}{\eta}  }
\end{align*}

Optimising in $ \eta $ amounts to choose $ \eta = \sqrt{ 2 k } $ and gives
\begin{align*}%$
\Esp{  \max_{\ell \geq k + 1} \!\! \ensemble{ \frac{1}{\ell Z_\ell} }  }   \leq   \frac{2}{\sqrt{2 k  } }  
\end{align*}
which is the desired result.
\end{proof}

\subsection{Construction of the model}\label{PartieConstruction} 

In what follows, we use the conventions of the last lemmas. In particular, we will note $ Y(x) $ the $ x $-exponential bias of the random variable $ Y $ as in lemma \ref{CouplageBiaisExp}. We recall that $ H^{(\Pe)}_n \underset{n \to +\infty}{\sim} \log \log n $ was defined in equation \eqref{Def:PrimeHarmonicSum} and that $\pi(n) := \sum_{p \in \Pe} \Unens{p \leq n}  \underset{n \to +\infty}{\sim} \frac{ n }{\log n } $ by the prime number theorem. We moreover define
\begin{align*}%$
k_n   & :=  \pe{ H^{(\Pe)}_n } \underset{n \to +\infty}{\sim}  \log \log n  \\
v_n   & :=  \exp\prth{ -\frac{ H_{k_n} }{ k_n } } = \exp\prth{ - \frac{ \log\log\log n + O(1) }{  \log\log n} } 
\end{align*}
\begin{theorem}[A mod-Poisson improvement of the Erdös-Kac model]\label{Theorem:ErdosKacImprovement} Let $ (B_k)_k $ and $ (B'_k)_k $ be two independent sequences of independent $ \ensemble{0, 1} $-Bernoulli random variables with probabilities
\begin{align*}%$
\Prob{ B_k = 1} = \Prob{ B'_k = 1} = \frac{1}{k}
\end{align*}

For $ \theta > 0 $, let $ B_k(\theta) $ be the $ \theta $-exponential bias of $ B_k = B_k(1) $ given by
\begin{align*}%$
\Prob{ B_k(\theta) = 1} =  \frac{ \theta }{\theta + k - 1 } = 1 - \Prob{ B_k(\theta) = 0}
\end{align*}
or equivalently
\begin{align*}%$
\Pp_{ B_k(\theta) } :=  \frac{ \theta^{ B_k } }{ \Esp{ \theta^{ B_k } } } \bullet \Pp_{ B_k }
\end{align*}

Let $ \gamma_n := H^{(\Pe)}_n $ and
\begin{align*}%$
C'_n := \sum_{\ell = 1}^{k_n} B'_\ell(1/\gamma_n)
\end{align*}

Let $ (I_\ell)_\ell $ be a sequence of i.i.d. random variables in $ \intcrochet{1, \pi(n)} $ independent of $ (B_k)_k $ and $ (B'_k)_k $ distributed according to 
\begin{align*}%$
\Prob{ I = k } =  \frac{  \frac{1}{ \ppee_k + v_n  +1 }  }{  \sum_{\ell = 1}^{ \pi(n) } \frac{1}{ \ppee_\ell + v_n  + 1 } }, \qquad k \in \intcrochet{1, \pi(n) } 
\end{align*}

Last, let $ \delta(I_1, \dots , I_k ) $ be the length of the random partition created by means of the paintbox process \eqref{Def:Paintbox} associated to $ (I_\ell)_\ell $.

Then, the random variable $ \Omega''_n $ defined by 
\begin{align*}%$
\Omega''_n   :=   \sum_{ k \neq I_1, \dots , I_{C'_n} } B_{\ppee_k}(v_n) + \delta(I_1, \dots, I_{C'_n}) 
\end{align*}
is such that
\begin{align*}%$
\prth{ \Omega''_n, H_n^{(\Pe)} } \cvmodp{n}{ + \infty } \Phi_\omega 
\end{align*}
and defines hence a more accurate model of $ \omega(U_n) $ than the usual model $  \Omega_n := \sum_{ k \leq \pi(n) } B_{\ppee_k} $. As a consequence of mod-Poisson convergence, one has in addition the CLT
\begin{align*}%$
\frac{\Omega''_n - \log\log n }{ \sqrt{\log \log n} } \cvlaw{n}{ + \infty } \Ns(0, 1) 
\end{align*}
\end{theorem}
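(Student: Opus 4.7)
My plan is to compute the Laplace transform of $\Omega''_n$ directly from its definition, show that it factorises as a product $F_n(x) G_n(x)$ of a ``biased independent model'' part and a ``paintbox correction'' part, and verify that after dividing by $e^{(x-1)\gamma_n}$ it converges locally uniformly on $\Rr_+$ to $\Phi_\omega(x)$. Conditioning on the tuple $(C'_n, I_1, \dots, I_{C'_n})$ and using the independence of $(B_{\ppee_k}(v_n))_k$ from this tuple, one obtains
\begin{align*}
\Esp{x^{\Omega''_n}} = F_n(x) \cdot G_n(x)
\end{align*}
where $F_n(x) := \prod_{k=1}^{\pi(n)} \Esp{x^{B_{\ppee_k}(v_n)}}$ is the ``unconditioned'' biased independent model Laplace transform and $G_n(x) := \Esp{x^{\delta(I_1,\dots,I_{C'_n})} \prod_{k \in \ensemble{I_1,\dots,I_{C'_n}}} (\ppee_k - 1 + v_n)/(\ppee_k - 1 + x v_n)}$ is the paintbox/biasing correction, using the identity $\Esp{x^{B_{\ppee_k}(v_n)}} = (\ppee_k - 1 + x v_n)/(\ppee_k - 1 + v_n)$.

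The factor $F_n(x)$ is analysed via lemma \ref{CouplageBiaisExp}, which yields $F_n(x) = \Esp{(xv_n)^{\Omega_n}}/\Esp{v_n^{\Omega_n}}$, and the Sath\'e--Selberg estimate \eqref{Thm:SelbergSatheIndependentEstimate} at $z = xv_n$ and $z = v_n$, giving $F_n(x) \sim \Phi_\Omega(xv_n)/\Phi_\Omega(v_n) \cdot e^{v_n(x-1)\gamma_n}$ locally uniformly in $x$. The choice $v_n = \exp(-H_{k_n}/k_n)$ together with $k_n = \pe{\gamma_n} \sim \gamma_n$ then yields $(v_n - 1)\gamma_n = -H_{k_n} + o(1)$, so that $F_n(x)/e^{(x-1)\gamma_n} \sim \Phi_\Omega(x) \cdot k_n^{-(x-1)} e^{-\gamma(x-1)}$ by continuity of $\Phi_\Omega$ at $1$; this diverges (for $x \neq 1$) unless $G_n$ compensates.

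The factor $G_n(x)$, analysed via the Ewens interpretation of $C'_n = \sum_{\ell=1}^{k_n} B'_\ell(1/\gamma_n)$ as the number of cycles of a $(1/\gamma_n)$-Ewens permutation on $k_n$ elements (combining remark \ref{RemarqueCyclesPermutation} and lemma \ref{CouplageBiaisExp}) together with the paintbox structure attached to $(I_\ell)_\ell$, should reduce asymptotically to the Laplace transform of the number of cycles of a \emph{uniform} permutation on $k_n$ elements: the law $\alpha_k \propto 1/(\ppee_k + v_n + 1)$ of the $I_\ell$'s is designed so that, after summing the correction factor against the paintbox probabilities and the Ewens weight of $C'_n$, the remaining paintbox contribution is $\Esp{x^{C_{k_n}}} = \Gamma(k_n + x)/(\Gamma(x) k_n!) \sim k_n^{x-1}/\Gamma(x)$. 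Combining with the Weierstrass identity $\Phi_C(x) = 1/(\Gamma(x) e^{\gamma(x-1)})$ from remark \ref{RemarqueCyclesPermutation}, this exactly cancels the $k_n^{1-x} e^{-\gamma(x-1)}$ defect from $F_n$ and produces $\Phi_\Omega(x) \Phi_C(x) = \Phi_\omega(x)$ in the limit.

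The local uniformity on compact subsets of $\Rr_+$ required by definition \ref{Def:ModPoissonCv} is obtained by upgrading the pointwise asymptotics to uniform ones, using lemmas \ref{Lemma:OrderPhiC} and \ref{Lemma:OrderDeltaPhiC} to bound the partial products $\phi_k$ and their distance to $\Phi_C$ uniformly on $\Rr_+$. The CLT then follows automatically from mod-Poisson convergence at a diverging speed. The main obstacle is the analysis of $G_n$: the precise cancellation between the $k_n^{-(x-1)}$ factor from $F_n$ and the compensating growth $k_n^{x-1}$ from $G_n$ is subtle, requiring a careful combinatorial bookkeeping of the paintbox partition combined with the Ewens structure of $C'_n$. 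The simultaneous design of $v_n$, $\alpha_k$ and the law of $C'_n$ is precisely what makes this cancellation work, and confirming it rigorously is the technical heart of the proof.
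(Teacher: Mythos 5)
Your approach — computing $\Esp{x^{\Omega''_n}}$ directly from the pathwise definition by conditioning on $(C'_n, I_1, \dots, I_{C'_n})$ and factorising as $F_n(x) G_n(x)$ — is genuinely different from the paper's. The paper never touches $G_n$: it proves mod-Poisson convergence backwards, by \emph{defining} $\Omega''_n$ through the bias $\Pp_{\Omega''_n} := \Phi_C^{(k_n)}\!\prth{\Omega_n/\gamma_n}/\Esp{\cdot} \bullet \Pp_{\Omega_n}$, establishing the convergence from that formula via lemmas \ref{Lemma:OrderPhiC} and \ref{Lemma:OrderDeltaPhiC} (steps 1 and 2), and only then attempting (step 3) to produce the pathwise description with the i.i.d. indices. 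Your factorisation is algebraically correct, and your analysis of $F_n$ is sound. But there is a genuine error in your analysis of $G_n$, and it is not merely a gap you have left open: the asserted asymptotics $G_n(x) \sim k_n^{x-1}/\Gamma(x)$ is false for the random variable $\Omega''_n$ as defined with $(I_\ell)_\ell$ i.i.d.

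To see this, note that $C'_n = 1 + \sum_{\ell = 2}^{k_n} B'_\ell(1/\gamma_n)$ and $\Esp{C'_n - 1} = \sum_{\ell \geq 2} \frac{1/\gamma_n}{1/\gamma_n + \ell - 1} \leq H_{k_n-1}/\gamma_n \to 0$, so $C'_n \to 1$ in probability (indeed $\Prob{C'_n = 1} \to 1$). Moreover the single-index correction $\Esp{(\ppee_I - 1 + v_n)/(\ppee_I - 1 + x v_n)} \to 1$, and the paintbox collision probability $\sum_k \Prob{I = k}^2 = O(1/\gamma_n^2)$ is negligible. A direct expansion of $G_n(x) = \sum_m \Prob{C'_n = m} \Esp{x^{\delta(I_1,\dots,I_m)} \prod\nolimits_{k \in \ensemble{I_1,\dots,I_m}} f(k)}$ then gives $G_n(x) \to x$, not $k_n^{x-1}/\Gamma(x)$. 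Since you correctly showed $F_n(x)/e^{(x-1)\gamma_n} \sim e^{-(x-1) H_{k_n}} \Phi_\Omega(x)$, the product $F_n G_n/e^{(x-1)\gamma_n}$ has no limit for $x \neq 1$, and the programme cannot close.

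The source of the discrepancy is that the $k_n^{x-1}/\Gamma(x)$ growth you invoke belongs to the $\mu_n(1)$-exponential bias $C'_n(\mu_n(1)) \eqlaw C_{k_n}$, which is what appears when you write $\Esp{x^{\Omega''_n}}$ as the moment ratio $\Esp{x^{\widetilde{\Omega}_n}\widetilde{\Omega}_n^{C'_n}}/\Esp{\widetilde{\Omega}_n^{C'_n}}$ and approximate $\widetilde{\Omega}_n$ by a Poisson of mean $\mu_n(1) \approx \gamma_n$; it does \emph{not} belong to the un-biased $C'_n$ which governs the pathwise model. The two are distinct precisely because in an iterated size-bias, the random index drawn at the $\ell$-th step does not have the same law as $I_1$: once a term has been replaced by the deterministic $1$, the next size-bias draws with a weight that includes that constant (whose mean is $1 \gg p_k(v_n)$), so the steps are strongly biased towards hitting the constant rather than selecting a fresh prime. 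A two-Bernoulli$(1/2)$ example shows $(\widetilde{B}_1 + \widetilde{B}_2)^{(s,2)}$ has law $(1/3, 2/3)$ on $\ensemble{1,2}$, whereas the i.i.d.-index construction gives $(1/4, 3/4)$. Thus the paper's step 3 derivation, which your $G_n$ heuristic implicitly relies on, is also where the argument breaks: the pathwise construction with i.i.d. $(I_\ell)_\ell$ and the paintbox $\delta$ is not equal in law to the bias model proved mod-Poisson convergent in steps 1 and 2. Any completion of your approach (or of the paper's) would need to replace the i.i.d. indices by the correct sequential law of the iterated size-bias.
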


% ==

\begin{proof}
The idea to construct such a random variable lies on two approximations : approximate the random variable $ P_{\gamma_n } \sim \Ps(\gamma_n)  $ with the independent model $ \Omega_n $ since at the first order of convergence (i.e. convergence in law) these random variables are close, and approximate the limiting function $ \Phi_\omega $ by a suitable truncation $ \Phi_\omega^{(n)} $ of its product since such a finite product converges locally uniformly to $ \Phi_\omega $.

$ $

\noindent\underline{\textbf{First step :}} Change of random variable : With $ \gamma_n := H_n^{(\Pe)} $, we have 
\begin{align*}%$
\Pp_{ \Omega'_n }  & :=  \frac{ \Phi_C\prth{ \frac{ \Omega_n }{\gamma_n } } }{  \Esp{ \Phi_C\prth{ \frac{ \Omega_n }{\gamma_n } }  } } \bullet \Pp_{ \Omega_n } \\
\Pp_{ \Omega_n(x) } & :=   \frac{ x^{ \Omega_n  }  }{  \Esp{ x^{ \Omega_n }  } } \bullet \Pp_{ \Omega_n } 
\end{align*}

Remark that 
\begin{align*}%$
\frac{ \Esp{ x^{ \Omega'_n } } }{ \Esp{ x^{ P_{\gamma_n} } } }    =    \frac{ \Esp{ \Phi_C\prth{ \frac{ \Omega_n }{\gamma_n } } x^{  \Omega_n } }  }{ \Esp{ \Phi_C\prth{ \frac{ \Omega_n }{\gamma_n } }  } \Esp{  x^{  \Omega_n } }  } \, \frac{ \Esp{ x^{ \Omega_n} } }{ \Esp{ x^{ P_{\gamma_n} } } }  =  \frac{ \Esp{ \Phi_C\prth{ \frac{ \Omega_n(x) }{\gamma_n } } }  }{ \Esp{ \Phi_C\prth{ \frac{ \Omega_n }{\gamma_n } }  } }  \, \frac{ \Esp{ x^{ \Omega_n} } }{ \Esp{ x^{ P_{\gamma_n} } } }
\end{align*}

By mod-Poisson convergence, we have, locally uniformly in $ x \in \Rr_+ $
\begin{align*}%$
\frac{ \Esp{ x^{ \Omega_n} } }{ \Esp{ x^{ P_{\gamma_n} } } } \tendvers{n}{ + \infty } \Phi_\Omega(x)
\end{align*}

By dominated convergence, continuity of $ \Phi_C $ and law of large numbers for $ \Omega_n $ (i.e. $ \Omega_n / \gamma_n \to~1 $ almost surely and in $ L^1 $), we have
\begin{align*}%$
\Esp{ \Phi_C\prth{ \frac{ \Omega_n }{\gamma_n } } } \tendvers{n}{ + \infty } \Phi_C(1) = 1
\end{align*}

Last, using lemma \ref{CouplageBiaisExp}, we see that 
\begin{align*}%$
\Omega_n(x) \eqlaw \sum_{ p \leq n, p \in \Pe } B_p(x) 
\end{align*}
with  $ \Prob{ B_p(x) = 1 } = \frac{x / p}{x /p + 1 - 1/p } = \frac{x }{ p + x - 1 } $. Using the law of large numbers and the fact that 
\begin{align*}%$
\frac{  \sum_{p \leq n}  \frac{x }{ p + x - 1 } }{ \sum_{p \leq n}  \frac{1 }{ p  } } \tendvers{n}{ + \infty } x 
\end{align*}
we get that $ \Omega_n(x) / \gamma_n \to x $ almost surely\footnote{We suppose that $ x $ and the Bernoulli random variables are defined on the same probability space.} and in $ L^1 $, which implies by continuity of $ \Phi_C $ and dominated convergence that
\begin{align*}%$
\Esp{ \Phi_C\prth{ \frac{ \Omega_n(x) }{\gamma_n } } } \tendvers{n}{ + \infty } \Phi_C(x) 
\end{align*}

Hence, we have a first random variable that converges in the mod-Poisson sense to $ \Phi_\omega = \Phi_C \Phi_\Omega :$ 
\begin{align*}%$
\prth{ \Omega'_n, H_n^{(\Pe)} } \cvmodp{n}{ + \infty } \Phi_\omega 
\end{align*}

% ==
$ $

\noindent\underline{\textbf{Second step :}} Truncation of $ \Phi_C $ : Let $ k \in \Nn^* $ and
\begin{align*}%$
\Phi_C^{ (k) }(x) := \prod_{\ell = 1}^k \prth{ 1 +  \frac{ x - 1 }{\ell} } e^{-  \frac{ x - 1 }{\ell} }
\end{align*}

We clearly have for all $ x \in \Rr $
\begin{align*}%$
\Phi_C^{ (k) }(x) \leq 1
\end{align*}
since every term of the product satisfies this inequality. Lemmas \ref{Lemma:OrderPhiC} and \ref{Lemma:OrderDeltaPhiC} show that when $ k \to + \infty $ 
\begin{align}\label{PhiEstimates} %$ % \norm{ \Phi_C^{ (k) } - \Phi_C  }_\infty \underset{ k \to + \infty}{\sim } \frac{1}{e k}
\norm{ \Phi_C^{ (k) } - \Phi_C  }_\infty \!\!  =  O\prth{ \frac{1}{ \sqrt{k} } } \quad\quad \mbox{and} \quad\quad \norm{ D \Phi_C^{ (k) } }_\infty \!\! = O\prth{ 1 }
\end{align}
where the supremum is taken for $ x \in \Rr_+ $ and where $ Df(x) := f'(x) $. Last,
\begin{align*}%$
\abs{ \Esp{ \Phi_C^{ (k) }\prth{ \frac{\Omega_n(x)}{\gamma_n } }  } - \Phi_C(x) }  \leq  \norm{ \Phi_C^{ (k) } - \Phi_C  }_\infty  + \norm{ D \Phi_C^{ (k) } }_\infty \Esp{ \abs{ \frac{\Omega_n(x)}{\gamma_n } - x } }
\end{align*}

The classical CLT for sums of independent random variables ensures that
\begin{align*}%$
\Esp{ \abs{ \frac{\Omega_n(x)}{\gamma_n } - x } } = O_x\prth{\frac{ 1 }{ \sqrt{\gamma_n } } }
\end{align*}

Thus, taking $ k = k_n = \pe{\gamma_n} $, one has
\begin{align}\label{Ineq:EstimateTruncatedPhi}%$
\abs{ \Esp{ \Phi_C^{ (k) }\prth{ \frac{\Omega_n(x)}{\gamma_n } }  } - \Phi_C(x) } =  O_x \prth{\frac{ 1 }{ \sqrt{\gamma_n} } } 
\end{align}

%With this choice of $ k_n $, one has moreover
%%
%%
%%
%\begin{align*}%$
%v_n := \exp\prth{ -\frac{H_{k_n} }{\gamma_n } } = \exp\prth{ - \frac{ \log\log\log n + O(1) }{  \log\log n} }
%\end{align*}

We can now define
\begin{align*}%$
\Pp_{ \Omega''_n } & :=  \frac{ \Phi_C^{ (k_n) }\prth{ \frac{ \Omega_n }{\gamma_n } } }{  \Esp{ \Phi_C^{ (k_n) }\prth{ \frac{ \Omega_n }{\gamma_n } }  } } \bullet \Pp_{ \Omega_n }
\end{align*}
and this definition implies
\begin{align*}%$
\frac{ \Esp{ x^{ \Omega''_n } } }{ \Esp{ x^{ P_{\gamma_n} } } }  =  \frac{ \Esp{ \Phi_C^{ (k_n) }\prth{ \frac{ \Omega_n }{\gamma_n } } x^{ \Omega_n } } }{ \Esp{ \Phi_C^{ (k_n) }\prth{ \frac{ \Omega_n }{\gamma_n } }  } \Esp{ x^{ P_{\gamma_n} } } }  =     \frac{ \Esp{ \Phi_C^{ (k_n) } \! \prth{ \frac{ \Omega_n(x) }{\gamma_n } } }  }{ \Esp{ \Phi_C^{ (k_n) } \! \prth{ \frac{ \Omega_n(1) }{\gamma_n } }  } }  \, \frac{ \Esp{ x^{ \Omega_n} } }{ \Esp{ x^{ P_{\gamma_n} } } }
\end{align*}

Using \eqref{Ineq:EstimateTruncatedPhi}, this last quantity still converges locally uniformly to $ \Phi_\omega $, i.e. 
\begin{align*}%$
\prth{ \Omega''_n, H_n^{(\Pe)} } \cvmodp{n}{ + \infty } \Phi_\omega 
\end{align*}

We now construct $ \Omega''_n $ pathwise by means of a sequence of Bernoulli and uniform random variables.

% ==
$ $

\noindent\underline{\textbf{Third step :}} Construction. Let $ (B'_\ell )_\ell $ a sequence of independent $ \ensemble{0, 1} $-Bernoulli random variables with $ \Prob{B'_\ell = 1 } = \frac{1}{ \ell } $ independent of $ \Omega_n $. We have
\begin{align*}%$
\Esp{ x^{ \Omega''_n } } & = \frac{ 1}{ c_n } \Esp{  x^{ \Omega_n } \prod_{ \ell = 1 }^{ k_n } \prth{ 1 + \frac{1}{\ell} \prth{ \frac{\Omega_n}{\gamma_n } - 1 } } e^{ - \frac{ \Omega_n }{ \ell \gamma_n  } }   }  \\
					& =  \frac{ 1 }{ c'_n } \Esp{  (x v_n)^{ \Omega_n } \prod_{ \ell = 1 }^{ k_n } \prth{  \frac{ \Omega_n }{ \gamma_n  } }^{B'_\ell }    }  \ \ \ \mbox{with  } \   v_n := \exp\prth{ - H_{k_n } / \gamma_n } \\
					& =  \frac{ 1 }{ c''_n } \Esp{  (x v_n)^{ \Omega_n } \prod_{ \ell = 1 }^{ k_n } \Omega_n^{B'_\ell(1/\gamma_n) }    } \ \ \ \mbox{with the notations of lemma \ref{CouplageBiaisExp} } 
\end{align*}

Setting 
\begin{align*}%$
C'_n := \sum_{\ell \leq k_n} B'_\ell(1/\gamma_n)
\end{align*}
we get
\begin{align*}%$
\Esp{ x^{ \Omega''_n } } = \frac{ 1 }{ c_n } \Esp{  (x v_n)^{ \Omega_n } \Omega_n^{ C'_n }    } =  \frac{\Esp{  v_n^{ \Omega_n } \Omega_n^{ C'_n }   x^{ \Omega_n }  }}{ \Esp{   v_n^{ \Omega_n } \Omega_n^{ C'_n }  } }
\end{align*}

This random variable is the combination of two biases : a first exponential bias in the vein of lemma \ref{CouplageBiaisExp} with parameter $ v_n $ and a random iteration of size-bias transform, the number of times this transform is applied being given by $ C'_n $. The effect of the exponential bias amounts to change the probabilities of $ \Omega_n $ to get 
\begin{align*}%$
\widetilde{\Omega}_n \eqlaw \sum_{k = 1}^{\pi(n)} B_{ \ppee_k }(v_n)
\end{align*}
with $ \pi(n) := \sum_{p \in \Pe} \Unens{p \leq n} \sim \frac{n}{\log n} $ by the Prime Number Theorem, and
\begin{align}\label{EqLawOmegas}%$
\Esp{ x^{ \Omega''_n } }  =   \frac{ \Esp{   \widetilde{\Omega}_n^{ C'_n }   x^{ \widetilde{\Omega}_n }  } }{ \Esp{   \widetilde{\Omega}_n^{ C'_n }  } } = \frac{ 1 }{ \Esp{   \widetilde{\Omega}_n^{ C'_n }  } } \sum_{ \ell = 0 }^{k_n } \Prob{ C'_n = \ell } \Esp{   \widetilde{\Omega}_n^{ \ell}   x^{ \widetilde{\Omega}_n }  }
\end{align}

A size bias with a power $ \ell $ is nothing else than the $ \ell $-th iteration of the usual size-bias transform defined in lemma \ref{CouplageBiaisTaille}, as one can see by writing, for a bounded measurable function~$ f $
\begin{align*}%$
\frac{\Esp{ X^2 f(X) }}{ \Esp{ X^2} } & =: \frac{ 1  }{ \Esp{ X^2} }  \Esp{ X g(X) } \ \ \mbox{with }  g(x) := xf(x) \\
							& = \frac{ \Esp{ X } }{ \Esp{ X^2} }  \Esp{  g\prth{ X^{(s)} } } = \frac{ \Esp{ X } }{ \Esp{ X^2} } \Esp{  X^{(s)} f\prth{ X^{(s)} } } \\
							& =  \frac{ \Esp{ X }  }{ \Esp{ X^2} } \Esp{ X^{(s)} } \Esp{   f\prth{ (X^{(s)} )^{(s)} } }
\end{align*}
and one can check setting $ f = id : x \mapsto x $ in the definition of the size-bias transform that 
\begin{align*}%$
\Esp{ X^{(s)} } = \frac{ \Esp{ X^2 }  }{ \Esp{ X } }
\end{align*}
i.e. 
\begin{align*}%$
\Pp_{ X^{(s, 2)} } := \Pp_{ (X^{(s)} )^{(s)} } = \frac{ X^2 }{ \Esp{X^2 } } \bullet \Pp_X 
\end{align*}

From now on, we denote by $ X^{(s, k)} := (X^{(s, k-1)})^{(s )} $ and $ X^{(s, 0)} := X $. In virtue of lemma \ref{CouplageBiaisTaille}, we have
\begin{align*}%$
\widetilde{\Omega}_n^{(s )} \eqlaw \widetilde{\Omega}_n - B_{\ppee_I}(v_n) +  B_{\ppee_I}(v_n)^{(s )}
\end{align*}
with $ I \in  \intcrochet{1, \pi(n) }  $ a random index independent of all random variables in presence of distribution
\begin{align}\label{LoiDeI}%$
\Prob{ I = k } =  \frac{  \frac{1}{ \ppee_k + v_n  +1 }  }{  \sum_{\ell = 1}^{ \pi(n) } \frac{1}{ \ppee_\ell + v_n  + 1 } }
\end{align}

In addition, for a $ \ensemble{0, 1}$-Bernoulli random variable $ B $, we have
\begin{align*}%$
\Esp{ x^{ B^{(s )} } } = \frac{\Esp{ B x^{ B } }}{\Esp{ B  }} = x 
\end{align*}
i.e. $ B^{(s )} = 1 $ almost surely (which amounts to change its parameter to $1$). Hence, 
\begin{align*}%$
\widetilde{\Omega}_n^{(s )} \eqlaw \widetilde{\Omega}_n - B_{\ppee_I}(v_n) +  1 = \sum_{ k \leq \pi(n), \, k \neq I } B_{\ppee_k}(v_n) + 1
\end{align*}

If we iterate the transformation, this amounts to select at random a certain variable $ J $ whose law is given by \eqref{LoiDeI} independent of all random variables in presence and in particular independent of $ I $. Two cases can occur : either $ I = J $ in which case  $ \widetilde{\Omega}_n^{(s,2 )} \eqlaw \widetilde{\Omega}_n^{(s )} $, or  $ I \neq J $ in which case $ \widetilde{\Omega}_n^{(s,2 )} \eqlaw  \sum_{  k \neq I, J } B_{\ppee_k}(v_n) + 2 $, which we can sumarize into
\begin{align*}%$
\widetilde{\Omega}_n^{(s, 2 )} \eqlaw \sum_{ k \neq I, J } B_{\ppee_k}(v_n) + 1 + \Unens{ I \neq J }
\end{align*}

The third iterate gives 
\begin{align*}%$
\widetilde{\Omega}_n^{(s, 3 )} \eqlaw \sum_{ k \neq I_1, I_2, I_3 } B_{\ppee_k}(v_n) + \delta(I_1, I_2, I_3)
\end{align*}
with 
\begin{align*}%$
\delta(I_1, I_2, I_3) = \begin{cases}  1 \ \mbox{  if }  I_1 = I_2 = I_3 \\ 
2 \ \mbox{  if }  I_i = I_j \neq I_k \ \mbox{ for } \ensemble{i, j, k} = \ensemble{1, 2, 3} \\
3 \ \mbox{  if }  I_1 \neq I_2 \neq I_3 \neq I_1 \end{cases}
\end{align*}

At the $ \ell $-th iteration, one has, with a sequence of i.i.d. indexes $ (I_\ell)_\ell $ of law given by \eqref{LoiDeI}
\begin{align*}%$
\widetilde{\Omega}_n^{(s, \ell )} \eqlaw \sum_{ k \neq I_1, \dots , I_\ell } B_{\ppee_k}(v_n) + \delta(I_1, \dots, I_\ell)
\end{align*}
where $ \delta(I_1, \dots, I_\ell) $ is the length of the random partition $ \lambda \vdash \ell $ constructed by means of the paintbox process \eqref{Def:Paintbox} associated with the i.i.d. sequence $ (I_1, \dots, I_\ell) $ (see \cite{PitmanStFlour}). In the case where $ I \sim \Us\prth{ \intcrochet{1, \pi(n) } } $, this random partition is equal in law to the cycle structure of a random uniform permutation $ \sigma \in  \Sg_{\ell} $ and in particular, $ \delta(I_1, \dots, I_\ell) = C(\sigma) $. In the case of our indexes, this distribution has still to be precised.

Last, the equality \eqref{EqLawOmegas} is equivalent to 
\begin{align*}%$
\Omega''_n   \eqlaw   \widetilde{\Omega}_n^{(s, C'_n )} 
\end{align*}
which implies that 
\begin{align*}%$
\Omega''_n   \eqlaw   \sum_{ k \neq I_1, \dots , I_{C'_n} } B_{\ppee_k}(v_n) + \delta(I_1, \dots, I_{C'_n}) \ \ \mbox{with } \ \ C'_n := \sum_{\ell \leq k_n} B'_\ell(1/\gamma_n)
\end{align*}
all the random variables considered being independent.
\end{proof}

% ==

\begin{remark} Note the following rewriting of the corrective term : one has refined the Erdös-Kac model $ \Omega_n $ by imposing a certain proportion of primes (in quantity $ \delta(I_1, \dots, I_{C'_n}) $) to be divisors with probability one. Knowing that certain primes are divisors allows to avoid them in the set of primes to consider to define the model. This operation is thus a conditioning :
\begin{align}\label{InterpConditionnement}%$
\Omega''_n \eqlaw \prth{ \Omega_n(v_n) \Big\vert B_{ \ppee_{I_1} }(v_n) = 1, \dots,  B_{ \ppee_{I_{C'_n	} } }(v_n) = 1}
\end{align}

\end{remark}

% ==

\begin{remark}
The result here proven shows the apparition of an approximate coalescing structure of the $ B_p^{(n)} $'s (the paintbox partition) in place of a more classical distribution on permutations. Such a ``coalescence of primes'' (or, at least, the $ B_p^{(n)} $) seems to be new. It is a natural consequence of iterated size-biasing of a sum of random variables (or a random walk) as proven in theorem \ref{Theorem:ErdosKacImprovement}, and the result can be considered in itself in the framework of pure probabilistic terms. Note that this last fact is a natural direct consequence of the bias interpretation of mod-Poisson convergence and of the form of the limiting function when Bernoulli random variables are involved. 
\end{remark}

% ==

\begin{remark} 

A drawback of this model is the fact that the primes randomly selected are not the large or small ones ; they are selected at random in the whole interval $ \intcrochet{1, \pi(n) } $ and not above or below a certain treshold ; this is the type of explanation that one would like to evoke for the appearance of the corrective term (the appearance of lots of particular primes in the prime decomposition).
Nevertheless, due to the structure of the law \eqref{LoiDeI}, there is a strong probability to only select the small primes in the conditioning.

Last, the change of probability can also be thought of as another drawback for one may prefer to have an identity of the type
\begin{align*}%$
\Omega''_n  \eqlaw  \prth{ \Omega_n \Big\vert B_{ \ppee_{I_1} } = 1, \dots,  B_{ \ppee_{I_{C'_n	} } } = 1}
\end{align*}
and get a refined model by means of the sole conditioning. Nevertheless, as this change of the parameters of the Bernoulli random variables is a bias, this is in accordance with Hardy and Littlewood's modification of the Cramer model by multiplying its probabilities with a suitable factor (namely, by biasing). Here, as in lots of other models, the natural modification of a coarse model consists in biasing it with a general weight and not only with indicators of particular events. 
\end{remark}

\begin{remark}
The choice of parameters used in theorem \ref{Theorem:ErdosKacImprovement} chosen to match the limiting generating functions is similar to the modification of the Cramer model to incorporate the twin primes, see \cite{TaoBlogCramer}.
\end{remark}

% ==
\section{Conclusion and perspectives}

The study of $ \omega(U_n) $ is fundamental for the understanding of the repartition of the primes, and to this goal, it would be interesting to go beyond the Erdös-Kac theorem and to look for instance at Beurling primes, where a Sath\'e-Selberg theorem was proven in \cite{Rupert} or at a functional renormalisation, i.e. the Erdös-Kubilius theorem (see e.g. \cite{Tenenbaum2}). Such a functional renormalisation involves a Brownian motion at the limit, but a more refined one would give a Poisson process in the same manner the Poisson distribution appears for $ \omega(U_n) $. Note that the functional generalisation of mod-Poisson convergence in the functional setting by means of a functional Fourier or Laplace transform is straightforward.

More generally, a better construction of the mod-Poisson model for $ \omega(U_n) $ has to be done. A general guess would be a random variable of the type 
\begin{align*}%$
\sum_{ k \neq I_1, \dots , I_{Z_n} } B_{\ppee_k} + \delta(I_1, \dots, I_{Z_n}) 
\end{align*}
with the $ (I_\ell)_\ell $ independent uniform random variables on $ \intcrochet{ A, \pi(n) } $ where $ A $ is to be found, and $ Z_n $ a random integer to be found. The advantages of such a model is the natural apparition of the number of cycles with the paintbox process (since, with the $ I_\ell $'s uniform, this is the number of cycles under the Haar measure of a certain symmetric group), and the interpretation in terms of conditioning on the large primes. But such a hypothetical model relies heavily on the nature of the random variable $ Z_n $, whose distribution is yet to be discovered.

Another generalisation of this result consists in transposing it into the general framework of a product of two $ \ensemble{0, 1} $-Bernoulli mod-Poisson limiting functions given in \eqref{LimitePoissonGenerale}. In particular, let  $ q = p^\nu $ for $ p \in \Pe $ and $ \nu \in \Nn^* $ and let $ \Ff_q $ denote the field with $q$ elements. Denote by  $ \Pe(\Ff_q[X]) $ the irreducible monic polynomials of $ \Ff_q[X] $ and by $ \omega_q(P_n) $ the number of divisors of $ P_n $ defined, for all monic $ Q \in \Pe(\Ff_q[X]) $ by 
\begin{align*}%$
\omega_q(Q) := \sum_{ \pi \in \Pe(\Ff_q[X]) } \Unens{ \pi \ddivise Q }
\end{align*}

Let $ Q_n $ be a random monic polynomial of degree equal to $n$ selected according to the uniform measure of this finite set. It is shown in \cite{KowalskiNikeghbali2} that
\begin{align*}%$
\frac{ \Esp{ z^{\omega_q(Q_n) } } }{ \Esp{ z^{ P_{\gamma_n } } } } = \Phi_{\omega_q}(z) (1 + o(1) ) 
\end{align*}
where $ \gamma_n = H_n  + O(1) $, where $ \abs{ \pi }_q  := q^{ \deg(\pi) } $, and where
\begin{align*}%$
\Phi_{\omega_q}(z) = \frac{e^{ -(z-1) \gamma }}{ \Gamma(z) } \prod_{ \pi \in \Pe(\Ff_q[X]) } \prth{ 1 + \frac{z - 1}{ \abs{ \pi }_q } } e^{ - \frac{z - 1}{\abs{ \pi }_q} } =: \Phi_C(z) \Phi_{\Omega_q}(z)
\end{align*}

This form is reminiscent of \eqref{Thm:SplittingModPoisson}, with a corrective model given by $ C(\sigma_n) $ for $ \sigma_n $ a random uniform permutation of $ \Sg_n $, and an independent model given by
\begin{align*}%$
\Omega_{q, n} := \sum_{ \underset{\pi \in \Pe(\Ff_q[X])}{ \deg(\pi) \leq n } } B_\pi = \sum_{ d = 1 }^n \sum_{ \underset{\pi \in \Pe(\Ff_q[X])}{ \deg(\pi) = d } } B_\pi
\end{align*}
where the $ (B_\pi)_\pi $ are independent Bernoulli random variables such that
\begin{align*}%$
\Prob{B_\pi = 1} = \frac{1}{\abs{\pi}_q } = 1 -  \Prob{B_\pi = 0}
\end{align*}

Here, the generalisation of theorem \ref{Theorem:ErdosKacImprovement} is straightforward, but if the rewriting in terms of bias-conditioning holds, the model without bias is still to be constructed.

\begin{remark}
Note another decomposition of $ \omega_q(Q_n) $ given in \cite[Prop 9.7]{RodgersSymArithm} that characterises its oscillation around $ \lim_{q \to +\infty}\Esp{\omega_q(Q_n) } = H_n $. This elegant formula uses directly the representation theory of the symmetric group and is in the vein of \cite{KowalskiNikeghbali2} that study this random variable by decomposing it in two parts, a squarefree part and a remainder. %Its generalisation would be interesting, but seems out of reach.
\end{remark}

Last, one could also try to adapt theorem \ref{Theorem:ErdosKacImprovement} to different functionals than $ \omega $, for instance the total number of prime divisors of $ U_n $ (if $ U_n = \prod_{p \in \Pe} p^{v_p(U_n) } $, it is defined as $ \sum_p v_p(U_n) $ ; this is the other random variable that satisfies a mod-Poisson convergence in the Sath\'e-Selberg theorems) or to $ \log\abs{ \zeta(1/2 + i t U } $, which was the initial motivation.

% ==
\section*{Acknowledgements}

The author expresses his thanks to O. H\'enard, A. Nikeghbali, J. Najnudel and N. Zygouras for several remarks concerning earlier drafts of the paper. The first version of this paper was written while the author was a guest at the University of California Irvine ; many thanks are due to this institution and in particular to M. Cranston.

The author was supported by EPSRC grant EP/L012154/1 and the Schweizerischer Nationalfonds PDFMP2 134897/1.

% == Bibliographie :

\bibliographystyle{amsplain}

% ==

\end{document}